\newcommand{\N}{\mathbb{N}}
\newtheorem{theorem}{Theorem}[section]
\newtheorem{lemma}[theorem]{Lemma}
\theoremstyle{definition}
\newtheorem{remark}[theorem]{Remark}
\renewcommand{\P}{\mathbb{P}}
\newcommand{\x}{\mathbf{x}}
\newcommand{\y}{\mathbf{y}}
\newcommand{\1}{\mathbbm{1}}
\newcommand{\G}{\mathscr{G}}
\newcommand{\C}{\mathscr{C}}
\newcommand{\Mcal}{\mathscr{M}}
\newcommand{\Ncal}{\mathscr{N}}
\newcommand{\E}{\mathbb{E}}
\newcommand{\0}{\mathbf{o}}
\newcommand{\R}{\mathbb{R}}
\renewcommand{\d}{\mathrm{d}}
\newcommand{\z}{\mathbf{z}}
\newcommand{\Z}{\mathbb{Z}}
\newcommand{\cX}{\mathcal{X}}
\newcommand{\deff}{\delta_{\mathrm{eff}}}
\newcommand{\B}{\mathcal{B}}
\author{
Benedikt Jahnel \orcidlink{0000-0002-4212-0065}\thanks{Technische Universit\"at Braunschweig, Universit\"atsplatz 2, 38106 Braunschweig, Germany}\thanksgap{0.4ex} \thanks{Weierstrass Institute for Applied Analysis and Stochastics, Mohrenstr.\ 39, 10117 Berlin, Germany} \\ benedikt.jahnel@tu-braunschweig.de
\and
Lukas L\"{u}chtrath \orcidlink{0000-0003-4969-806X}\thanksmark{2}\\lukas.luechtrath@wias-berlin.de
}
\title{Existence of subcritical percolation phases for generalised weight-dependent random connection models}
\date{September 7, 2023}
\ifundef{\abstract}{}{\patchcmd{\abstract}%
{\quotation}{\quotation\noindent\ignorespaces}{}{}}
\begin{document}
\maketitle 
\begin{spacing}{0.9}
\begin{abstract} We derive a sufficient condition for the existence of a subcritical percolation phase for a wide range of continuum percolation models where each vertex is embedded into Euclidean space according to an iid-marked stationary Poisson point process. In contrast to many established models, the probability of existence of an edge may not only depend on the distance and the weights of its end vertices but also on a surrounding vertex set. Our results can be applied in particular to models combining heavy-tailed degree distributions and long-range effects, which are typically well connected. 

\noindent More precisely, we study the critical annulus-crossing intensity \(\widehat{\lambda}_c\) which is smaller or equal to the classical critical percolation intensity \(\lambda_c\) and derive sharp conditions for \(\widehat{\lambda}_c>0\) by controlling the occurrence of long edges. We further present tail bounds for the Euclidean diameter and number of points of the typical cluster in the subcritical phase and apply our results to several examples including some in which \(\widehat{\lambda}_c<\lambda_c\).

\smallskip
\noindent\footnotesize{{\textbf{AMS-MSC 2020}: 60K35}

\smallskip
\noindent\textbf{Key Words}: Phase transition, component size, geometric random graph, random connection model, Boolean model, scale-free percolation, long-range percolation, interference graphs}
\end{abstract}
\end{spacing}

\section{Introduction}
The standard objects studied in continuum percolation theory are random graphs \(\G^\lambda\) on the points of a homogeneous Poisson point process on \(\R^d\) of intensity \(\lambda>0\). The spatial embedding of the vertices enters the connection probability in a way that spatially close vertices are connected by an edge with a higher probability than far apart vertices. Many well established models belong to that framework, i.e., the Gilbert graph~\cite{Gilbert61}, the random connection model~\cite{MeesterPenroseSarkar1997,Penrose2016}, the Poisson--Boolean model~\cite{Hall85,Gouere08} and its soft version~\cite{GGM2022}, continuum scale-free percolation~\cite{DeijfenHofstadHooghiemstra2013,DeprezWuthrich2019}, or the age-dependent random connection model~\cite{GraLuMo2022}. The standard question in percolation theory is then whether there exists a critical Poisson intensity \(\lambda_c\in(0,\infty)\) such that the connected component of the origin (added to the graph if necessary) is infinite with a positive probability for all \(\lambda>\lambda_c\) but is finite almost surely for \(\lambda<\lambda_c\). We call the regime \((0,\lambda_c)\) the \emph{subcritical percolation phase} and \((\lambda_c,\infty)\) the \emph{supercritical percolation phase}. Often, by ergodicity of the underlying Poisson point process and the way edges are drawn, if \(\lambda>\lambda_c\) then \(\G^\lambda\) contains an infinite connected component almost surely and if \(\lambda<\lambda_c\) there cannot be an infinite connected component somewhere in the graph. Moreover, under very mild assumptions on the distribution of \(\G^\lambda\), an existing infinite component is almost surely unique~\cite{BurtonKeane89}. 

In dimensions \(d\geq 2\), percolation models typically contain a supercritical phase \cite{MeesterRoy1996} which is essentially a consequence of the existence of a supercritical percolation phase in nearest-neighbour Bernoulli percolation on \(\Z^2\)~\cite{Grimmett1999}. Therefore, for \(d\geq 2\), the proof of existence of a non-trivial phase-transition \(\lambda_c\in(0,\infty)\) reduces to proving the existence of a subcritical phase. In this article, we present sufficient conditions for the existence of a subcritical percolation phase in a quite general setting. Under these conditions, we are able to exhibit estimates for the tail behavior of the distribution of the Euclidean diameter and the number of points of the component of a typical point in the subcritical regime. 

To prove the existence of a subcritical phase in the Boolean model, where each vertex is assigned an i.i.d.\ radius and any pair of vertices is connected whenever their associated balls intersect, Gou\'{e}r\'{e} introduced an alternative critical intensity \(\widehat{\lambda}_c\)~\cite{Gouere08}. It is defined as the infimum of all intensities for which the probability of existence of a path that connects a vertex in an \(\alpha^{1/d}\) neighbourhood of the origin to a vertex at distance at least \(2\alpha^{1/d}\) is uniformly bounded from zero. This then gives rise to multi-scale arguments which can then also be used to derive tail bounds for the Euclidean diameter or the number of points in the new subcritical phase. In recent works the equality of \(\widehat\lambda_c\) and \(\lambda_c\) was studied~\cite{AhlbergTassionTeixeira2018,DCopinRaoufiTassion2020,DembinTassion2022} and it is shown that indeed \(\widehat\lambda_c=\lambda_c\) for all but at most countable many radius distributions. 

To prove the existence of supercritical percolation phases in one-dimensional models, Gracar et al.~\cite{GraLuMo2022} recently introduced a new coefficient \(\deff\). This new coefficient measures the occurrence of long edges in a way comparable to classical long-range percolation models, seen from a coarse-grained perspective. It was shown that \(\deff\) can further be used to derive transience of a supercritical random walk and continuity of the percolation probability~\cite{Moench2023}, indicating the importance of this quantity to analyse these kind of inhomogeneous random graphs. We show in this paper that \(\deff\) can also be used to determine whether \(\widehat\lambda_c>0\) or not and we shall see that unlike in the Boolean model the equality of \(\lambda_c\) and \(\widehat\lambda_c\) is not generally true. More precisely, we show that for \(\deff>2\) the arguments of~\cite{Gouere08} can be adapted yielding \(\widehat{\lambda}_c>0\). However, for \(\deff<2\) we prove \(\widehat{\lambda}_c=0\) and present examples for which \(\deff<2\) but \(\lambda_c>0\) at the same time.

 In~\cite{GraLuMo2022}, the coefficient~\(\deff\) is derived for the \emph{weight-dependent random connection model}~\cite{GHMM2022}. In this class of models, containing all the aforementioned models, each vertex carries an independent mark distributed uniformly on \((0,1)\). The connection mechanism is such that edges are drawn independently given the vertex locations and their marks. Additionally, connections to spatially close vertices or vertices with small marks are preferred. Here, the first preference leads to clustering and the latter can be used to entail heavy-tailed degree distributions to the graph. This is done in a way that the degree-distribution is modelled independently of the clustering, i.e., the degree distribution depends only on the way the vertex marks enter the connection probability but does not depend on the geometric restrictions. We build on their work but extend the setting to models where both effects are allowed to depend on each other. Moreover, we additionally allow the edges to depend on local neighbourhoods of their end vertices. This particularly include models without the typical monotony assumption on the intensity. That is, additional vertices may not increase the percolation probability. Our result then shows that for small enough intensities no infinite component exists. However, it may be the case that the same be true for too large intensities. Put differently, without the standard monotonicity assumption, there may be no longer a unique subcritical and supercritical percolation phase. We present an example for a non-monotone model below.

\subsection{Framework}
We consider graphs where the vertex set is given by a standard Poisson point process on \(\R^d\) of intensity \(\lambda>0\). Each vertex carries an independent mark distributed uniformly on \((0,1)\). We denote a vertex by \(\mathbf{x}=(x,u_x)\) and refer to \(x\in\R^d\) as the vertex's \emph{location} and to \(u_x\in(0,1)\) as the vertex's \emph{mark}. 
We denote the set of all marked vertices by \(\cX=\cX^\lambda\) and remark that \(\cX\) is a standard Poisson point process on \(\R^d\times(0,1)\) of intensity \(\lambda>0\) \cite{LastPenrose2017}. 
Given \(\cX\), a pair of vertices \(\x=(x,u_x)\) and \(\y=(y,u_y)\in\cX\) is connected by an edge with probability \(\mathbf{p}(\x,\y,\cX\setminus\{\x,\y\})=\mathbf{p}(\y,\x,\cX\setminus\{\x,\y\})\). That is, whether an edge is drawn depends not only on the potential end vertices of the edge but may also depend on all other vertices. We denote the underlying probability measure containing the marked Poisson point process and the randomness of the edges by \(\P^\lambda\) and its expectation by \(\E^\lambda\). We assume that \(\mathbf{p}\) is invariant under translations and rotations, and that it fulfills the following homogeneity condition when integrated with respect to the underlying Poisson point process: For two deterministically given vertices \(\x\) and \(\y\), we have  
\begin{equation}
    \E^\lambda[\mathbf{p}(\x,\y,\cX)]= \varphi(u_x, u_y, |x-y|^d) \label{eq:varphi},
\end{equation}
where \(\varphi:(0,1)\times(0,1)\times(0,\infty)\to[0,1]\) is a measurable function. Note that the deterministically given vertices are no elements of the Poisson point process almost surely. We make the following assumptions on the function \(\varphi\): 
\begin{enumerate}[(i)]
    \item \(\varphi\) is symmetric in the first two arguments and non-increasing in the third argument. Hence, connections to spatially close vertices are preferred. 
    \item The integral
        \[\int_0^1\int_0^1\int_0^\infty \varphi (s,t,r) \d r\, \d s \, \d t\]
    is finite. This ensures that expected degrees are finite. 
\end{enumerate}
 Condition~\eqref{eq:varphi} essentially says that the annealed probability of two given vertices being connected only depends on the given vertices' marks and their distance. Indeed, since \(\cX\) is a homogeneous process and \(\mathbf{p}\) is translation, and rotation invariant, the Poisson point process and its influence on the connection probability looks in expectation everywhere the same. 

We denote the resulting undirected graph by \(\G=\G^\lambda\). We denote the event that \(\x\) and \(\y\) are connected by an edge by \(\x\sim\y\) and that they belong to the same connected component by \(\x\leftrightarrow\y\). For a given vertex \(\x\), we denote by \(\mathscr{C}(\x)\) the connected component of \(\x\). Moreover, for two positive functions \(f\) and \(g\) we write \(f\asymp g\) to indicate that \(f/g\) is bounded from zero and infinity. We denote by \(\sharp A\) the number of elements in a countable set \(A\). We further use constants \(c,C>0\) throughout the manuscript. These constants mainly occur as integration constants and may change from line to line.

\subsection{Main result}
To formulate our main result, we work with the Palm version \cite{LastPenrose2017} of the model. That is, a distinguishable typical vertex \(\mathbf{o}=(o,U_o)\) is placed at the origin \(o\), marked with an independent uniform random variable \(U_o\) and is then added to the vertex set. The graph \(\G_o=\G_o^\lambda\) is then constructed as above but now with the additional vertex \(\mathbf{o}\) and we denote the underlying law by \(\P_o^\lambda\). We define the random variables
\begin{equation*}
    \begin{aligned}
        & \C:=\C(\mathbf{o}) = \{\x\in\cX: \mathbf{o}\leftrightarrow\x\}, \\
        & \Mcal:= \Mcal(\mathbf{o})= \sup\{|x|^d: \x\in\C(\mathbf{o})\}, \ \text{ and } \\
       	 & \Ncal:= \Ncal(\mathbf{o})= \sharp\C(\mathbf{o}).
    \end{aligned}
\end{equation*}
We further define as
\begin{equation*}
	\begin{aligned}
		\lambda_c:=\lambda_c(\mathbf{p}) & = \inf \big\{\lambda>0:\P_o^\lambda(\sharp\C(\0)=\infty)>0\big\}= \inf \big\{\lambda>0:\lim_{\alpha\to\infty}\P_o^\lambda(\exists \x:|x|^d>\alpha, \text{ and }\0\leftrightarrow\x)>0\big\}
	\end{aligned}
\end{equation*}
the {\em classical critical percolation intensity} and as
\begin{equation*}
	\begin{aligned}
		\widehat{\lambda}_c:=\widehat{\lambda}_c(\mathbf{p})=\inf\big\{\lambda>0: \lim_{\alpha\to\infty}\P^\lambda(\exists \x,\y: |x|^d<\alpha, |y|^d>2^d \alpha, \text{ and }\x\leftrightarrow\y)>0\big\}
	\end{aligned} 
\end{equation*}
the {\em critical intensity} introduced in~\cite{Gouere08} as discussed in the introduction. It is clear that \(\widehat{\lambda}_c\leq\lambda_c\).
 
To ensure the positivity of \(\widehat{\lambda}_c\), we rely on two features our graph has to provide: First, the number of `long edges' should be sufficiently small, which then yields that percolation must happen locally in some sense. Second, the influence of the whole vertex set on the connection mechanism should only be driven by spatially close vertices to ensure that local percolation in two balls at a large distance can be seen as independent.

Following~\cite{GraLuMo2022}, the occurrence of `long edges' is measured by the following quantity, called \emph{the effective decay exponent} (associated with \(\varphi\))
\begin{equation}\label{eq:DefDeff}
    \begin{aligned}
        	\deff:=\deff(\varphi)=-\lim_{n\to\infty}\frac{\log \int_{\nicefrac{1}{n}}^1\int_{\nicefrac{1}{n}}^1 \varphi(s,t,n)\d s \d t}{\log n}.
    \end{aligned}
\end{equation}
Imagine two sets consisting of \(n\) vertices each at distance roughly \(n^{1/d}\). If \(n\) is large, the smallest mark one would expect to find in each set is roughly of size \(1/n\). Hence, the probability that two vertices, one of which chosen randomly from one of the sets and the other vertex independently picked from the other set, is roughly given by the numerator's integral where the influence of the surrounding vertices has already been integrated out. If the limit exists, this probability decays as \(n^{-\deff}\). Ignoring the correlations involved and treating all edges as being independent, the expected number of edges between the two sets is then given by \(n^{2-\deff}\) which grows large for \(\deff<2\) and decays to zero for \(\deff>2\). For a more detailed discussion on \(\deff\), we refer the reader to~\cite{GraLuMo2022}. Since we cannot a priori assume that this limit always exists and additionally we have to deal with random fluctuations in the vertex marks, we define for \(\mu>0\) the two following limits 
\begin{equation*}
	\begin{aligned}
		& \psi^-(\mu):=\psi^-(\mu,\psi)=-\limsup_{n\to\infty} \frac{\log \int\limits_{n^{-1-\mu}}^1\int\limits_{n^{-1-\mu}}^1 \varphi(s,t,n)\d s \d t}{\log n} \quad \text{ and } \\
		& \psi^+(\mu):=\psi^+(\mu,\psi)=-\liminf_{n\to\infty} \frac{\log \int\limits_{n^{-1+\mu}}^1\int\limits_{n^{-1+\mu}}^1 \varphi(s,t,n)\d s \d t}{\log n}.
	\end{aligned}
\end{equation*} 
These are then used to define
\[
	\deff^-:=\deff^-(\varphi)=\lim_{\mu\downarrow 0}\psi^-(\mu,\varphi) \quad \text{ and } \quad \deff^+:=\deff^+(\varphi)=\lim_{\mu\downarrow 0}\psi^+(\mu,\varphi).
\]
If both of the latter limits coincide, we observe the primarily introduced coefficient \(\deff\). The idea of the additional parameter \(\mu\) is to allow more flexibility for the smallest found vertex mark. In order to show that the existence of long edges is unlikely, i.e., \(\deff^->2\), we allow slightly smaller marks, whereas for the probability of existence of long edges, i.e., \(\deff^+>2\), we only consider slightly larger marks. To determine the influence of the mark's fluctuations on \(\deff\), we further define
\[
	\bar{\mu} := \sup\{\mu>0: \mu< \psi^-(\mu)-2\}.
\]
We do not define the analogue for \(\psi^+\) as we will not need this quantity throughout the manuscript. Let us comment here on our monotonicity assumptions on the connection function \(\varphi\). Intuitively, one may want to consider the marks as inverse weights and ask \(\varphi\) not only to be non-increasing in the `distance argument' but in the `mark arguments' as well. We do not need this strong monotonicity assumption for our proof to work but some kind of monotonicity is implicitly build in by our reliance on \(\deff\). Indeed, to have \(\deff>2\), we have the requirement that for connecting vertices at distance \(n^{1/d}\), the vertices with marks roughly \(\nicefrac{1}{n}\) have to be the dominant ones, as pointed out above. We do however allow connection mechanisms that `punish'{} vertices with atypically small marks. We present such a model below in Section~\ref{sec:example}.

For our second requirement on the graph, let us write \(\mathcal{B}(r,x)\) for the open ball of radius \(r\) centered in \(x\) and \(\mathcal{B}(r):=\mathcal{B}(r,o)\). To specify local percolation and quantify the influence of far-apart vertices on the connection mechanism, we need to introduce some notation. For measureable domains \(D\subset\R^d\) and \(I\subset(0,1)\) we write  
\[
    \cX(D\times I)=\{\x=(x,u_x)\in\cX\colon x\in D, u_x\in I\}.
\] 
If \(I=(0,1)\), we simply write \(\cX(D)=\cX(D\times(0,1))\). Further, we denote by \(\mathscr{C}_D(\x)\) the connected component of \(\x\) restricted to the subgraph on the vertices of \(\cX(D)\). For a given location \(x\in\R^d\) and \(\alpha>1\), we define the event
\begin{equation} \label{eq:GEvent}
    G(\alpha,x)= \big\{\exists \y\in\cX(\mathcal{B}({\alpha^{1/d}},x)):\C_{\mathcal{B}({3\alpha^{1/d}},x)}(\y)\not\subset\cX(\mathcal{B}({2\alpha^{1/d}},x))\big\}.
\end{equation}
That is, the vertex \(\y\) located close to \(x\) reaches with a path a vertex located in the annulus \(\mathcal{B}(3 \alpha^{1/d},x)\setminus\mathcal{B}(2\alpha^{1/d},x)\) without using vertices located outside \(\mathcal{B}(3\alpha^{1/d},x)\). We abbreviate \(G(\alpha) = G(\alpha,o)\).
We say that \(\G^\lambda\) is \emph{(polynomial) mixing}, with index \(\zeta>0\), if there exists a constant \(C_\text{mix}>0\) such that for all \(\lambda>0\) and all \(|x|>6\alpha^{1/d}\), we have
\begin{equation}\label{eq:mixing}
    \big|\operatorname{Cov}\big(\1_{G({\alpha})},\1_{G(\alpha,x)}\big)\big| \leq C_\text{mix} \, \lambda \alpha^{-\zeta}.
\end{equation}
Note that the examples mentioned in the introduction are all mixing in our sense as all of them have \(\mathbf{p}(\y,\x,\cX\setminus\{\x,\y\})=\varphi(u_x,u_y,|x-y|^d)\). We identify this case with \(\zeta=\infty\). 

\medskip

\begin{theorem}[The subcritical phase] \label{thm:Subcritical} Consider the graph \(\G^\lambda\) with connection probability \(\mathbf{p}\) such that \(\G^\lambda\) is mixing with index \(\zeta>0\), and \(\deff^-(\varphi)>2\).
   \begin{enumerate}[(i)]
   	\item  We have \(\widehat{\lambda}_c(\mathbf{p})>0\) and 

   	\item for all \(\lambda<\widehat{\lambda}_c(\mathbf{p})\) and \(a<\zeta\wedge\bar{\mu}\)
   	\[
   		\E^\lambda \Mcal^a <\infty \qquad \text{ and } \qquad \E^\lambda \Ncal^a <\infty. 
   	\]
   \end{enumerate}
\end{theorem}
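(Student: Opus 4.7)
The plan is to prove Theorem~\ref{thm:Subcritical} by a multi-scale renormalisation argument, adapting Gouéré's strategy for the Boolean model \cite{Gouere08} to the weight-dependent setting. Write $\pi(\alpha,\lambda) := \P^\lambda(G(\alpha))$ for the annulus-crossing probability. The goal is to derive a recursive inequality comparing $\pi(\alpha)$ with $\pi(\beta)$ at a smaller scale $\beta = \alpha/K$ for a large fixed integer $K$, and then iterate from a scale $\alpha_0$ at which $\pi(\alpha_0)$ is small. A standard first-moment estimate bounding the expected number of edges crossing $\partial \B(\alpha_0^{1/d})$ will show $\pi(\alpha_0) \to 0$ as $\lambda \to 0$, so it suffices to engineer a recursion that amplifies smallness.

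The heart of the argument is a geometric dichotomy. On the event $G(\alpha)$, the witnessing path either contains an edge $\{\x,\y\}$ of length at least $\rho\beta^{1/d}$ for some fixed threshold $\rho$, or else it successively traverses smaller annuli at scale $\beta$. In the latter case a covering argument forces the simultaneous occurrence of $G(\beta,x_1) \cap G(\beta,x_2)$ for a pair of centres $x_1,x_2$ drawn from a deterministic set of $N = N(K)$ pairs satisfying $|x_1-x_2| > 6\beta^{1/d}$. The mixing hypothesis~\eqref{eq:mixing} then bounds the joint probability by $\pi(\beta)^2 + C_{\text{mix}}\lambda\beta^{-\zeta}$. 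The long-edge contribution is controlled via $\deff^-(\varphi) > 2$: truncating marks below the threshold $\beta^{-1-\mu}$ for an auxiliary $\mu < \bar\mu$ and applying Mecke's formula, the edges between truncated pairs contribute at most $C\lambda^2 \beta^{2-\psi^-(\mu)}$, while the event of a vertex with mark below $\beta^{-1-\mu}$ in $\B(\alpha^{1/d})$ contributes at most $C\lambda\beta^{-\mu}$. The definition of $\bar\mu$ ensures the exponent $\psi^-(\mu) - 1 > a$ for any $a < \bar\mu$. Combining these ingredients yields the master recursion
\[
\pi(\alpha) \leq N \pi(\beta)^2 + C\lambda \beta^{-(a \wedge \zeta)}.
\]

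Setting $g(\alpha) := N\pi(\alpha)$, a standard bootstrap shows that if $g(\alpha_0) < 1/4$ then $g(\alpha) \leq C'\alpha^{-(a \wedge \zeta)}$ for all $\alpha \geq \alpha_0$, which proves $\widehat{\lambda}_c(\mathbf{p}) > 0$ and, for every $\lambda < \widehat{\lambda}_c(\mathbf{p})$, the polynomial decay $\pi(\alpha) \lesssim \alpha^{-a}$ for all $a < \zeta \wedge \bar\mu$. For the tail bounds in part (ii), the diameter estimate $\P_o^\lambda(\Mcal > 2^d \alpha) \leq C\pi(\alpha)$ follows by identifying a large component of $\0$ with an annulus-crossing centred at the origin, and integration against $\alpha^{a-1}\d\alpha$ gives $\E^\lambda\Mcal^a < \infty$. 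For the cluster size $\Ncal$, I would decompose space into dyadic annuli $\B(2^{(k+1)/d}) \setminus \B(2^{k/d})$, bound the expected number of component points per annulus via Mecke's formula and the two-point connection probability, and sum using the polynomial decay of $\pi$ together with the long-edge bound from $\deff^- > 2$.

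The main obstacle will be the correct handling of the mark fluctuations, since the bare inequality $\deff^-(\varphi) > 2$ is only an asymptotic statement: to obtain a quantitative polynomial decay at every scale one must pass to $\psi^-(\mu)$ for strictly positive $\mu$ and carefully balance the truncation error $\lambda\beta^{-\mu}$ against the bulk term $\lambda^2\beta^{2-\psi^-(\mu)}$, which is exactly what forces the final exponent to lie below $\bar\mu$. A secondary difficulty is that the connection mechanism may depend on the ambient point configuration, so the independence step in Gouéré's original argument has to be replaced throughout by the quantitative mixing bound~\eqref{eq:mixing}, which is responsible for the additional $\zeta$ in the final exponent $a < \zeta \wedge \bar\mu$.
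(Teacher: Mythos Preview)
Your overall strategy---Gouéré's multi-scale scheme, the geometric dichotomy into long edges versus nested annulus crossings, the mark truncation at level $\beta^{-1-\mu}$ to exploit $\psi^-(\mu)$, and the replacement of independence by the mixing bound~\eqref{eq:mixing}---matches the paper's proof essentially step for step, including the recursion $\pi(K\beta)\le N\pi(\beta)^2+C\lambda\beta^{-(\mu\wedge\zeta)}$ and the bootstrap. Two points need repair.

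First, the claimed inequality $\P_o^\lambda(\Mcal>2^d\alpha)\le C\pi(\alpha)$ is not correct as written: the event $G(\alpha)$ only sees paths that stay inside $\B(3\alpha^{1/d})$, whereas the component of $\0$ may escape via a long edge. The paper therefore writes $\P_o^\lambda(\Mcal>2^d\alpha)\le\P^\lambda(G(\alpha))+\P_o^\lambda(E(\alpha))+\P^\lambda(H(\alpha))$, where $E(\alpha)$ is the event that $\0$ itself is incident to an edge of length at least $\alpha^{1/d}$. Since you already control $H(\alpha)$ and the same mark-truncation argument handles $E(\alpha)$, this is a minor omission, but it must be included.

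Second, your plan for $\Ncal$ has a genuine gap. Decomposing into dyadic annuli and bounding, via Mecke's formula, the expected number of cluster points in each annulus through the two-point function $\P(\0\leftrightarrow\x)$ yields only $\E^\lambda\Ncal<\infty$; it says nothing about $\E^\lambda\Ncal^a$ for $a>1$, which is exactly what is claimed when $\zeta\wedge\bar\mu>1$ (e.g.\ any model with $\zeta=\infty$ and $\bar\mu>1$). Controlling higher moments this way would require multi-point correlation bounds you have not established. The paper avoids the issue entirely with the elementary observation
\[
\P_o^\lambda(\Ncal\ge\alpha)\le\P_o^\lambda(\Mcal\ge c\alpha)+\P^\lambda\big(\sharp\cX(\B(c\alpha^{1/d}))>\alpha\big),
\]
where for small enough $c$ the second term decays exponentially by a Chernoff bound, so the tail of $\Ncal$ inherits the polynomial tail of $\Mcal$ and $\E^\lambda\Ncal^a<\infty$ follows immediately. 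You should replace your annulus decomposition by this argument.
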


\medskip

\begin{remark}~\
	\begin{enumerate}[(a)]
		\item Part~\textit{(ii)} of the Theorem shows that for \(\lambda<\hat{\lambda}\), the tails of \(\Mcal\) and \(\Ncal\) decay no faster than polynomial with index \(\zeta\wedge\bar{\mu}\) up to potentially slowly varying corrections. 
		\item In order to derive \(\widehat{\lambda}_c>0\) in Part \textit{(i)}, it suffices to replace the right-hand side in~\eqref{eq:mixing} with \(\lambda g(\alpha)\) where \(g(\alpha)\) tends to zero at an arbitrary speed. If this is the case, we derive in Part~\textit{(ii)} moment bounds depending on \(\bar{\mu}\) and \(g(\alpha)\). More precisely, if \(g(\alpha)\) decays slower than any polynomial, the of decay of the tail probabilities of \(\Mcal\) and \(\Ncal\) is bounded by \(g(\alpha)\) up to slowly varying corrections. 
	\end{enumerate}
	
\end{remark} 

Our second result shows that for \(\deff<2\) and \(\zeta=\infty\), we always have \(\widehat{\lambda}_c=0\). We present examples in Section~\ref{sec:example} with \(\lambda_c>0\) and \(\deff<2\) showing that \(\widehat{\lambda}_c\) cannot always be used to derive the existence of subcritical phases. 

\medskip 

\begin{theorem} \label{thm:NonSharpness} 
	Consider the graph \(\G^\lambda\) with connection probability \(\mathbf{p}\). Assume further \(\deff^+(\varphi)<2\) and \(\zeta=\infty\), i.e., edges are drawn independently from the surrounding vertices, then we have \(\widehat{\lambda}_c(\mathbf{p})=0\). 	
\end{theorem}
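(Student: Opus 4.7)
The aim is to establish that, for every fixed $\lambda > 0$, there exists $c = c(\lambda) > 0$ such that
\[
\P^\lambda\bigl(\exists\,\x,\y\in\cX:\ |x|^d<\alpha,\ |y|^d>2^d\alpha,\ \x\leftrightarrow\y\bigr) \;\geq\; c
\]
for every sufficiently large $\alpha$; this will force $\widehat{\lambda}_c(\mathbf{p}) \leq \lambda$ and hence, by arbitrariness of $\lambda$, $\widehat{\lambda}_c(\mathbf{p}) = 0$. Since $\x \sim \y$ implies $\x \leftrightarrow \y$, it suffices to produce such a pair joined by a single direct edge. As $\psi^+(\mu)$ is non-decreasing in $\mu$ with limit $\deff^+(\varphi) < 2$, I would pick $\mu > 0$ and $\epsilon > 0$ with $\psi^+(\mu) \leq 2 - 2\epsilon$; the $-\liminf$ definition of $\psi^+$ then guarantees that
\[
J_n(\mu) \;:=\; \int_{n^{-1+\mu}}^{1}\!\!\int_{n^{-1+\mu}}^{1} \varphi(s,t,n)\,\d s\,\d t \;\geq\; n^{-(2-\epsilon)}
\]
for every sufficiently large $n$.

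Next I would set $n := 4^d\alpha$, $I := [n^{-1+\mu},1]$, and choose $B_1 := \B(\alpha^{1/d}/2, o)$, $B_2 := \B(\alpha^{1/d}/2, z)$ with $|z| = 3\alpha^{1/d}$. These balls are disjoint, $B_1 \subset \{|x|^d < \alpha\}$, $B_2 \subset \{|y|^d > 2^d\alpha\}$, and $|x-y|^d \in [2^d\alpha, n]$ for every $x \in B_1, y \in B_2$. Letting $N$ count the edges joining $\cX(B_1 \times I)$ to $\cX(B_2 \times I)$, the Mecke equation together with the monotonicity of $\varphi$ in its third argument would yield
\[
\E^\lambda[N] \;\geq\; \lambda^2 |B_1| \, |B_2| \, J_n(\mu) \;\geq\; c' \lambda^2 \alpha^\epsilon \;\longrightarrow\; \infty.
\]
Under $\zeta = \infty$, given $\cX$ the edges are independent Bernoullis with parameter $\varphi(u_x, u_y, |x-y|^d)$, so writing $S := \sum_{\x \in \cX(B_1\times I)} \sum_{\y \in \cX(B_2 \times I)} \varphi(u_x,u_y,|x-y|^d)$, the inequality $\prod(1-p) \leq \exp(-\sum p)$ would give
\[
\P^\lambda(N=0) \;\leq\; \E\bigl[e^{-S}\bigr] \;\leq\; e^{-\E[S]/2} + \P\bigl(S < \tfrac{1}{2}\E[S]\bigr),
\]
in which the first term vanishes as $\alpha \to \infty$.

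It would then remain to bound the second term away from $1$, which I would address by Paley--Zygmund / Chebyshev after computing $\operatorname{Var}(S)$ via the second-factorial-moment formula for bilinear functionals of two independent Poisson processes. This decomposes $\operatorname{Var}(S)$ into a diagonal piece $\lambda^2 \int\!\!\int \varphi^2 \leq \E[S]$ and two ``shared-endpoint'' terms $\lambda^3 \int_{B_1 \times I} h(\x)^2 \, \d\x$ with $h(\x) := \int_{B_2 \times I} \varphi(\x, \y)\,\d\y$ (and its $\y$-symmetric analogue). This variance step will be the hardest part of the plan: because $\varphi$ is only assumed monotone in distance and not in the mark arguments, the naive bound $h(\x) \leq |B_2||I| \asymp \alpha$ produces a shared-endpoint term of order $\alpha\cdot\E[S]$, which dwarfs $(\E[S])^2 \asymp \alpha^{2\epsilon}$ as soon as $\epsilon < 1$. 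My refinement should exploit that the estimate on $J_n(\mu)$ is an \emph{averaged} bound in the mark variables, in combination with the uniform-on-$I$ distribution of the Poisson marks, to upgrade it to a pointwise-in-$s$ control on $\int_I \varphi(s, t, n)\,\d t$---perhaps via a further truncation of the mark window or via a concentration inequality for Poisson $U$-statistics. Once such a uniform bound is in hand, $\operatorname{Var}(S) \leq C(\E[S])^2$ follows, yielding $\P(N \geq 1) \geq c > 0$ uniformly in large $\alpha$ and completing the proof.
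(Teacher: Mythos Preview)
Your reduction to $\P^\lambda(N=0)\le\E^\lambda[e^{-S}]$ is essentially how the paper begins, but the plan to control $\P(S<\tfrac12\E S)$ by a second-moment/Paley--Zygmund argument runs into a genuine obstruction that your proposed fixes do not resolve. With $g(s):=\int_I\varphi(s,t,n)\,\d t$, the shared-endpoint contribution to $\operatorname{Var}(S)$ is of order $\lambda^3|B_1|\,|B_2|^2\int_I g(s)^2\,\d s$, and the only universal bound is $\int_I g^2\le(\sup g)\int_I g\le \int_I g=J_n(\mu)$. Hence $\operatorname{Var}(S)/(\E S)^2\gtrsim (\lambda\alpha J_n(\mu))^{-1}\gtrsim \alpha^{1-\epsilon}$, which diverges throughout the range $\deff^+\in(1,2)$; Chebyshev and Paley--Zygmund then give nothing. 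Upgrading the integrated estimate $J_n(\mu)\ge n^{-2+\epsilon}$ to pointwise control of $g(s)$ is not possible under the stated hypotheses: $\varphi$ is not assumed monotone in the mark arguments, so all the mass of $\varphi(\cdot,\cdot,n)$ may be supported on an $s$-set of Lebesgue measure of order $n^{-1+\epsilon/2}$, making $g$ large on a vanishingly small set and zero elsewhere. General concentration inequalities for bilinear Poisson functionals face the same issue, since their constants again involve $\sup_s g(s)$ or equivalent quantities.

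The paper avoids any moment computation for $S$ by transferring the concentration to the empirical distribution of the marks, which is a far tamer object. After conditioning on $\sharp\cX(B_1),\sharp\cX(B_2)\ge\alpha$, it calls the mark vector $(u_1,\dots,u_\alpha)$ \emph{$\mu$-regular} if $\sharp\{j:u_j\le i/\lfloor\alpha^{1-\mu}\rfloor\}\ge \tfrac12\,i\alpha/\lfloor\alpha^{1-\mu}\rfloor$ for every $i\le\lfloor\alpha^{1-\mu}\rfloor$; a union bound over $i$ of binomial Chernoff estimates shows this fails with probability at most $\alpha^{1-\mu}e^{-c\alpha^\mu}$. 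On the event that both mark vectors are $\mu$-regular one has the deterministic inequality $F_k(t)\ge\tfrac13(t-\alpha^{\mu-1})$ for the empirical distribution functions, and the paper writes
\[
\sum_{i,j}\varphi(u_i,v_j,8^d\alpha)=\alpha^2\!\int\!\!\int\varphi(u,v,8^d\alpha)\,F_1(\d u)\,F_2(\d v)\ge C\alpha^2\!\int_{\alpha^{\mu-1}}^{1}\!\!\int_{\alpha^{\mu-1}}^{1}\varphi(u,v,8^d\alpha)\,\d u\,\d v\ge C\alpha^{2-\psi^+(\mu)-\epsilon'}\to\infty,
\]
whence $\P^\lambda(N=0)\le\P(\text{not regular})+\exp(-C\alpha^{2-\psi^+(\mu)-\epsilon'})\to 0$; in fact the paper obtains $\P^\lambda(G'(\alpha))\to 1$, not merely a uniform positive lower bound. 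The point is that the correct object on which to run concentration is the one-dimensional empirical measure of the i.i.d.\ uniform marks, controlled by elementary large-deviation bounds that do not involve $\varphi$ at all; once the marks are regular, $S$ is large \emph{deterministically}, and no variance of $S$ need ever be estimated.
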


\medskip

We present the proofs of the Theorems~\ref{thm:Subcritical} and~\ref{thm:NonSharpness} in Section~\ref{sec:Proofs}. 

\subsection{Examples}\label{sec:example}
\paragraph{The weight-dependent random connection model}
This model was introduced in~\cite{GHMM2022} and further studied in~\cite{GLM2021, GraLuMo2022,GGM2022,Moench2023}. Here, given \(\cX\), edges are drawn independently from one another and we have
\begin{align}\label{ex_1}
        \mathbf{p}(\x,\y,\cX\setminus\{\x,\y\})=\varphi(u_x,u_y,|x-y|^d)=\rho\big(\beta^{-1} g(u_x,u_y)|x-y|^d\big),
\end{align}
where \(\rho:(0,\infty)\to[0,1]\) is an integrable and non-increasing \emph{profile function} and \(g:(0,1)^2\to(0,\infty)\) is a non-increasing \emph{kernel function} which is symmetric in both arguments. Further, \(\beta>0\) controls the edge intensity of the graph by scaling the vertices' distance in the connection probability. By the Poisson point process mapping theorem~\cite{LastPenrose2017}, it is no loss of generality to fix \(\beta=1\) and only vary the Poisson intensity \(\lambda\) or doing the opposite and fixing \(\lambda=1\) whilst varying \(\beta\). Two types of profile functions have been established in the literature: The \emph{long-range} profile function \(\rho(x):=p(1\wedge |x|^{-\delta})\) for \(\delta> 1\) or the \emph{short-range} profile function \(\rho(x):=p\mathbbm{1}\{0\le x\le 1\}\) for some \(p\in(0,1]\). These profile functions together with the \emph{interpolation-kernel}
    \[
        g(s,t):= (s\wedge t)^{\gamma}(s\vee t)^{\gamma'}, \text{ for } \gamma\in[0,1), \gamma'\in[0,2-\gamma),
    \]
introduced in~\cite{GraLuMo2022}, represent many of the established models such as the Poisson--Boolean model~\cite{Gouere08,Hall85,Hirsch2017} and its soft version~\cite{GGM2022}, scale-free percolation~\cite{DeijfenHofstadHooghiemstra2013,DeprezWuthrich2019} and the age-dependent random connection model~\cite{GGLM2019}, cf.~Table~\ref{tab:interPol}.
\begin{table}
\begin{center}
\caption{Various choices for \(\gamma\), \(\gamma'\) and \(\delta\) for the weight-dependent random connection model and the models they represent in the literature together with their \(\deff>2\) phases and their tail bounds \(\bar{\mu}\). Here, to shorten notation, \(\delta=\infty\) represents models constructed with \(\rho\) being the indicator function.}
\begin{tabular}{l|l|l|l}
 \textbf{Parameters} & \(\boldsymbol{\deff>2}\) \textbf{iff} & \(\boldsymbol{\bar{\mu}}\) & \textbf{Names and references} \\ \hline 
 \(\gamma = 0, \gamma' = 0, \delta = \infty\) & always & \(\infty\) & Gilbert's disc model \cite{Gilbert61}, \\ & & & random geometric graph \cite{Penrose2003} \\ \hline
 \(\gamma = 0, \gamma' =0, \delta<\infty\) & \(\delta>2\) & \(\delta-2\) & random connection model \cite{MeesterPenroseSarkar1997,Penrose2016}, \\ & & & long-range percolation \cite{Schulman1983}\\ \hline
 \(\gamma>0, \gamma'=0,\delta=\infty\) & \(\gamma<1\) & \(\tfrac{1}{\gamma}-1\) & Boolean model \cite{Hall85,Gouere08}, \\ & & & scale-free Gilbert graph \cite{Hirsch2017} \\ \hline
 \(\gamma>0, \gamma' = 0, \delta<\infty\) & \(\delta>2\), \(\gamma<\tfrac{\delta-1}{\delta}\) & \((\tfrac{\delta-1}{\gamma\delta}-1)\wedge (\delta-2)\) & soft Boolean model \cite{GGM2022} \\ \hline
 \(\gamma = 0, \gamma'>0, \delta = \infty\) & \(\gamma'<1\) & \(\infty\) & ultra-small scale-free geometric  \\ & & & network (\(\gamma'>1\)) \cite{Yukich2006}, \\ & & & weak-kernel model~\cite{GHMM2022,Lue2022} \\ \hline
 \(\gamma>0, \gamma'=\gamma, \delta\leq \infty\) & \(\delta>2\), \(\gamma<\tfrac{1}{2}\) & \(\tfrac{\delta(1-2\gamma)}{2\delta\gamma-1}\) &scale-free percolation \cite{DeijfenHofstadHooghiemstra2013,DeprezWuthrich2019}, \\ & & & geometric inhomogeneous \\ & & & random graphs \cite{BringmannKeuschLengler2019} \\ \hline
 \(\gamma>0, \gamma'=1-\gamma, \delta\leq\infty\) & never & not applicable  & age-dependent random \\ & & & connection model \cite{GGLM2019}
\end{tabular}
\label{tab:interPol}
\end{center}
\end{table}
The model~\eqref{ex_1} has also been studied as \emph{geometric inhomogeneous random graphs} and as \emph{kernel-based spatial random graphs} in a similar yet slightly different parametrisation in~\cite{KomjathyLapinskasLengler2021,BringmannKeuschLengler2019,HofstadHoornMaitra2023, JorrKomMit2023}. 

Let us calculate \(\deff\) and \(\bar{\mu}\) for the interpolation kernel model with a long-range profile function. To this end, we consider the integral in~\eqref{eq:DefDeff} and obtain
\begin{equation*}
	\begin{aligned}
		\int\limits_{n^{-1-\mu}}^1  \int\limits_{s}^1  s^{-\gamma\delta}t^{-\gamma'\delta}n^{-\delta} \, \d t \, \d s \asymp 
        \begin{cases}
		    n^{-\delta}\int\limits_{n^{-1-\mu}}^1  s^{-\gamma\delta} \, \d s, & \text{ if } \ \gamma'<\nicefrac{1}{\delta}, \\
            n^{-\delta}\int\limits_{n^{-1-\mu}}^1 s^{1-\gamma\delta-\gamma'\delta} \, \d s , & \text{ if } \ \gamma'>\nicefrac{1}{\delta}.
		\end{cases}
	\end{aligned}
\end{equation*}
There are four cases to be distinguished. If \(\gamma'<1/\delta\) and \(\gamma<1/\delta\), or \(\gamma'>1/\delta\) and \(\gamma<2/\delta-\gamma'\), the respective integral on the right-hand side is of constant order and hence simply \(\deff=\delta\). For the case \(\gamma'<1/\delta\) and \(\gamma>1/\delta\), we have
\begin{equation*}
	\begin{aligned}
		n^{-\delta}\int\limits_{n^{-1-\mu}}^1s^{-\gamma\delta} \, \d s\asymp n^{\gamma\delta-1-\delta-\mu(1-\gamma\delta)}.
	\end{aligned}
\end{equation*}
Sending \(\mu\downarrow 0\), we obtain \(\deff>2\) if \(\delta>2\) and \(\gamma<1-1/\delta\) and \(\deff<2\) if either \(\delta\leq 2\) or \(\gamma>1-1/\delta\) by continuity. To obtain \(\bar{\mu}\), we solve
\[
	\bar{\mu}=1+\delta-\gamma\delta+\bar{\mu}(1-\gamma\delta)-2\qquad\Longleftrightarrow\qquad \bar{\mu}=\frac{\delta-1}{\gamma\delta}-1.
\]
Let us finally consider the remaining case \(\gamma'>1/\delta\) and \(\gamma>2/\delta-\gamma'\). By~\cite[Proposition~2.4]{Lue2022}, we have \(\lambda_c=0\) if \(\gamma+\gamma'>1\). We therefore restrict ourselves to the case \(\gamma'\leq 1-\gamma\). Since additionally \(\gamma<1\), this further implies \(\delta>2\). Then, the integral to be calculated reads
\begin{equation*}
	\begin{aligned}
		n^{-\delta}\int\limits_{n^{-1-\mu}}^1 s^{1-\gamma\delta-\gamma'\delta} \, \d s & \asymp n^{-\delta-(1+\mu)(2-\delta(\gamma'+\gamma))}.
	\end{aligned}
\end{equation*}
Sending \(\mu\downarrow 0\), we observe \(\deff>2\) if \(\gamma'<1-\gamma\). We further solve 
\[
	\bar{\mu}=\delta+(1+\bar{\mu})(2-\delta(\gamma'+\gamma))-2\qquad\Longleftrightarrow\qquad\bar{\mu}= \frac{\delta(1-\gamma-\gamma')}{\delta(\gamma+\gamma')-1}.
\]
Summarising the above, \(\deff>2\) if \(\delta>2\), \(\gamma<1-1/\delta\), and \(\gamma'<1-\gamma\), see Figure~\ref{fig:Deff}. Results for models with a short-range profile function can be obtained by sending \(\delta\to\infty\). To further deduce tail bounds for the Euclidean diameter and the number of points of the origin's component for the examples from the literature, one chooses \(\gamma\), \(\gamma'\), and \(\delta\) appropriately, see Table~\ref{tab:interPol}. Let us further remark that for the soft Boolean model with the choice of parameters \(\gamma'=0\), \(1-1/\delta<\gamma<1-1/(\delta+1)\), and \(\delta>1\), we obtain \(\deff<2\) and therefore \(\widehat{\lambda}_c=0\). However, by~\cite{GLM2021} we still have \(\lambda_c>0\). Further, for \(\gamma>1-1/(\delta+1)\), still \(\deff<2\) but now \(\lambda_c=0\). This shows on the one hand that, unlike in the Boolean model, we do not always have \(\widehat{\lambda}_c=\lambda_c\). On the other hand, it also shows that \(\deff\) cannot be used alone to derive the absence of subcritical phases.
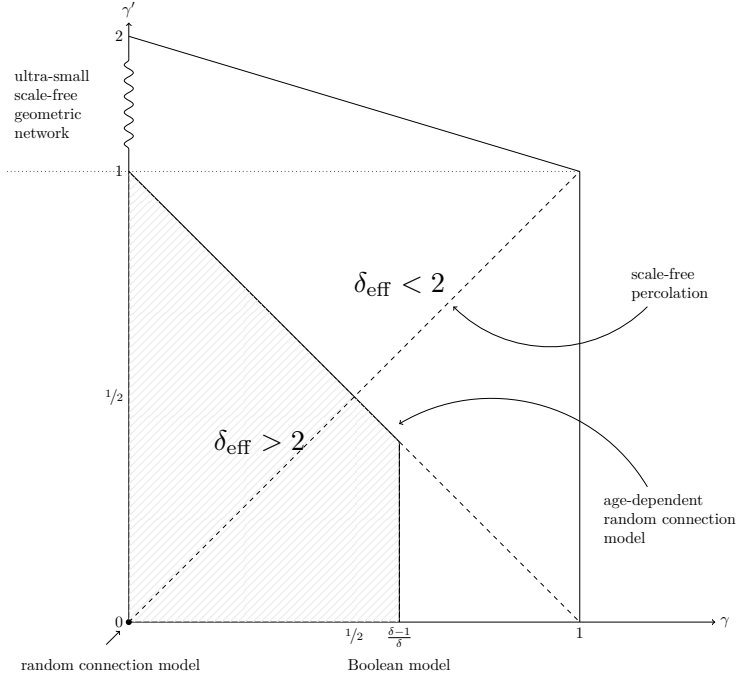
\begin{figure}
\begin{center}
\resizebox{0.6\textwidth}{!}{
\begin{tikzpicture}[every node/.style={scale=1}]
    \draw[->] (0,0) to (13,0) node[right] {$\gamma$};
    \draw	(5,0) node[anchor=north] {\nicefrac{1}{2}}
    (10,0) node[anchor=north] {1};

    \draw[dotted] (-2.7,10) to (10,10);
    \draw[] (10,0) to (10,10)
	   (10,10) to (0,13);

    \draw[](0,0) to (0,10.5);
    \draw [->] (0,12.5) to (0,13.3) node[above] {$\gamma'$};
    \draw[decorate, decoration = {snake, segment length = 10 pt, amplitude = 1mm}] (0, 10.5)--(0,12.5);
    \draw	(0,0) node[anchor=east] {0}
        (0,5) node[anchor=east] {\nicefrac{1}{2}}
        (0,10) node[anchor=east] {1}
        (0,13) node[anchor = east] {2};

    \draw[thick] (0,10) to (6,4);
    \draw[dashed] (6,4) to (10,0);
    \draw[dashed] (0,0) to (10,10);
    \draw (6,-0.7) node[align = left, anchor = north] {Boolean model};
    \draw (-1.7,11.5) node[align = left] {ultra-small \\ scale-free \\ geometric \\ network};
    \draw (12,2.3) node[align = left] {age-dependent \\ random connection \\ model};
    \draw[->, bend angle = 45, bend right] (11.5,3) to (6,4.4);
    \draw (12,7.5) node[align = left] {scale-free \\ percolation};
    \draw[->, bend angle = 45, bend left] (11.5,7) to (7.2,7);
    \draw (0,0) node[circle, fill = black, scale=0.4] {};
    \draw (-0.4,-0.7) node[align = left, anchor = north] {random connection model};
    \draw[->] (-0.5, -0.5) to (-0.2,-0.2);

    \draw (6, 0) node[anchor = north] {$\tfrac{\delta-1}{\delta}$};
    \draw[thick] (6,0) to (6,4);

    \draw[pattern=north east lines, pattern color=lightgray!30!, draw=none] 
        (0,0) plot[smooth,samples=200,domain=0:6,variable=\g]({0},{10*\g/6}) -- 
         plot[smooth,samples=200,domain=6:0,variable=\g]({6},{4*\g/6});
 
    \draw (6,7.5) node[scale = 1.8, thick] {$\deff<2$};
    \draw (2.9,4) node[scale = 1.8, thick, align = left] {$\deff>2$};
\end{tikzpicture}
}
\end{center}
\caption{Phase diagram in \(\gamma\) and \(\gamma'\) for the weight-dependent random connection model constructed with the interpolation kernel and a profile function of polynomial decay at rate \(\delta>2\). The solid lines marks the phase transition \(\deff=2\). Dashed lines represent no change of behaviour.}
\label{fig:Deff}
\end{figure}

\paragraph{Soft Boolean model with local interference} We also present an example of a mixing graph where the edge probabilities actually depend on the surrounding point cloud. The idea is to combine the soft Boolean model~\cite{GGM2022} with local inference and noise in the spirit of SINR percolation~\cite{DousseEtAl2006, Tobias2020}. To formulate the model let us denote, for a given vertex \(\y=(y,s)\) and \(\xi\geq 0\), the random variable
\[
   N^\xi(\y,\cX):= \sharp \big\{\z\in\cX : |z-y|^d\leq s^{-\xi}\big\}. 
\]
The graph \(\G=\G^\lambda\) is then generated by connecting \(\x=(x,t)\) and \(\y=(y,s)\) with probability
\begin{align}\label{ex_2}
   \mathbf{p}(\x,\y,\cX\setminus\{\x,\y\}) = \1\{s<t\}\frac{1\wedge s^{-\gamma\delta}|x-y|^{-d\delta}}{1+N^{\xi}(\y,\cX\setminus \{\x,\y\})}+\1\{s\geq t\}\frac{1\wedge t^{-\gamma\delta}|x-y|^{-d\delta}}{1+N^{\xi}(\x,\cX\setminus \{\x,\y\})},
\end{align}
where again \(\gamma\in[0,1)\) and \(\delta>2\).
Since \(N_\lambda^\xi((y,s),\cX)\overset{d}{\sim}N_\lambda^\xi((o,s),\cX)\), Condition~\eqref{eq:varphi} is satisfied with 
\[
   \varphi(s,t,r)= \1\{s<t\}\E^\lambda\Big[\frac{1\wedge s^{-\gamma\delta}r^{-\delta}}{1+N^{\xi}((o,s),\cX)}\Big]+\1\{s\geq t\}\E^{\lambda}\Big[\frac{1\wedge t^{-\gamma\delta}r^{-\delta}}{1+N^{\xi}((o,t),\cX)}\Big].
\]

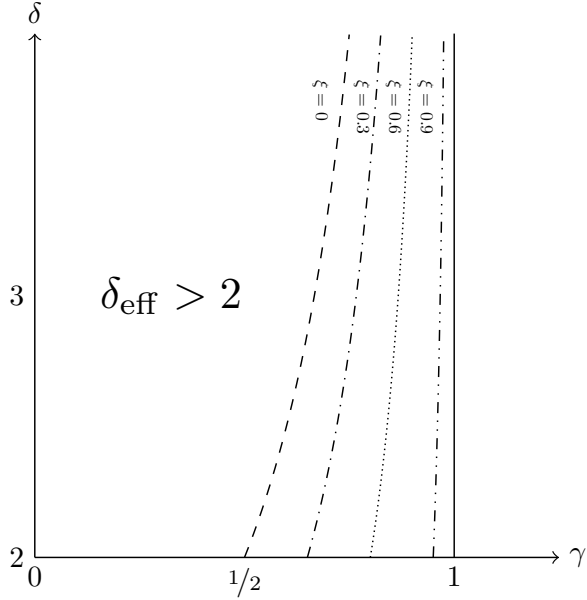
\begin{figure}
    \begin{center}
    \resizebox{0.5\textwidth}{!}{
    \begin{tikzpicture}[every node/.style={scale=0.8}]
        \draw[->] (0,0) -- (5,0) node[right] {$\gamma$};
        \draw	(0,0) node[anchor=north] {0}
        (2,0) node[anchor=north] {\nicefrac{1}{2}}
        (4,0) node[anchor=north] {1};

        \draw[->] (0,0) -- (0,5) node[above] {$\delta$};
        \draw (0,0) node[anchor=east] {2}
            (0,2.5) node[anchor=east] {3};

        \draw[] (4,0) -- (4,5);

        \draw (1.3,2.5) node[scale = 1.5, thick]{$\deff>2$};
        
        \draw[dashed] (0,0) plot[domain=0.5:3/4,variable=\g]({4*\g},{-5+2.5*1/(1-\g)});
        \draw (90/33, 4.3) node[scale = 0.6, rotate = 270] {$\xi =0\phantom{.0}$};

        \draw[dash dot] (0,0) plot[domain=13/20:33/40,variable=\g]({4*\g},{-5+2.5*((1-0.3)/(1-\g)});
        \draw (103/33, 4.3) node[scale = 0.6, rotate = 270] {$\xi =0.3$};

        \draw[densely dotted] (0,0) plot[domain=16/20:36/40,variable=\g]({4*\g},{-5+2.5*((1-0.6)/(1-\g)});
        \draw (113/33, 4.3) node[scale = 0.6, rotate = 270] {$\xi =0.6$};

        \draw[dash dot dot] (0,0) plot[domain=19/20:39/40,variable=\g]({4*\g},{-5+2.5*((1-0.9)/(1-\g)});
        \draw (123/33, 4.3) node[scale = 0.6, rotate = 270] {$\xi =0.9$};
        
    \end{tikzpicture}
    }
    \end{center}
    \caption{Phase diagram for \(\gamma\) and \(\delta\) for the soft Boolean model with local interference. Represented from left to right the phase transition for \(\deff>2\) for \(\xi=0,0.3,0.6,0.9\).}
    \label{fig:DistractBool}
\end{figure}

Let us note that the model~\eqref{ex_2} is a combination of the soft Boolean model, a special instance of the weight-dependent random connection model above, with random interference coming from the vertices surrounding the vertices that are to be connected. More precisely, each vertex \(\y=(y,s)\) has a sphere of influence of radius \(s^{-\gamma/d}\) and a sphere of interference of radius \(s^{-\xi/d}\). The mark \(s\) can be understood as an inverse attraction parameter and the smaller \(s\) the more attractive the vertex is.  Now, the vertex \(\y\) likes to connect to each vertex located within its sphere of influence, enlarged by independent Pareto random variables to include long-range effects (cf.\ the description of the soft Boolean model in~\cite{GGM2022}). However, \(\y\) gets distracted by all vertices contained in its sphere of interference which makes it more difficult to form edges. Note that either both spheres are big (for small \(s\)) or small (for large \(s\)).
For \(\xi=0\), each sphere of interference is of radius one and the model reduces to a version of the soft Boolean model with some additional yet on large scales insignificant fluctuations. 

We start by showing that \(\G\) is mixing for \(\xi<1\). To this end, observe that the left-hand side in~\eqref{eq:mixing} is bounded by some constant times the probability of the complement of the largest event on which the covariance is zero. Further observe that \(G(\alpha)\) and \(G(\alpha,x)\) are independent whenever there is no pair of vertices in the involved balls such that their spheres of interference intersect. In other words, the covariance in \eqref{eq:mixing} is bounded by the probability of existence of vertices \(\y\in\cX(\mathcal{B}({3\alpha^{1/d}}))\) and \(\z\in\cX(\mathcal{B}({3\alpha^{1/d}},x))\) for which
\[
    u_y^{-\xi}+u_z^{-\xi}\geq |z-y|^d.
\]
We recall that \(|x|^d\geq 6^d\alpha\) and hence \(|z-y|^d\geq c\alpha\) for some constant \(c\). Moreover, \(u_y^{-\xi}+u_z^{-\xi}<2(u_y^{-\xi}\vee u_z^{-\xi})\). Hence, we can further bound by the probability of existence of a vertex located in one of the two involved balls with mark smaller than \(c\alpha^{-\nicefrac{1}{\xi}}\). By Poisson thinning~\cite{LastPenrose2017}, the expected number of such vertices and hence also the probability of existence of at least one such vertex is bounded by
\[
    C \lambda \alpha^{1-1/\xi}
\]
for some constant $C>0$. 
\begin{figure}
    \centering
    \begin{subfigure}{0.4\textwidth}
        \centering
        \frame{\includegraphics[scale = 0.491]{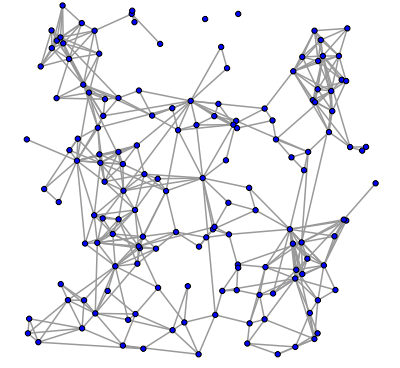}}
        \caption{Soft Boolean model} 
        \label{fig:softVsInterSoft}
    \end{subfigure}\qquad
    \begin{subfigure}{0.4\textwidth}
        \centering
        \frame{\includegraphics[scale = 0.49]{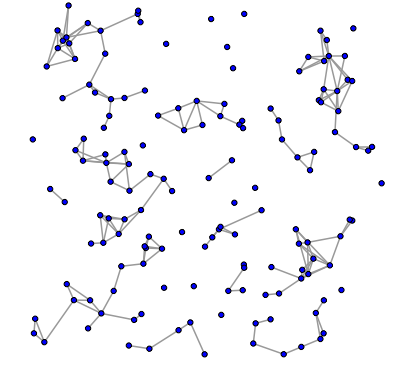}}
        \caption{Soft Boolean model with local interference}
        \label{fig:softVsInterInter}
    \end{subfigure}
    \caption{Examples for the soft Boolean model, Fig.~\ref{fig:softVsInterSoft}, and the soft Boolean model with local interference, Fig.~\ref{fig:softVsInterInter}, on the same \(150\) vertices sampled from a Poisson process of intensity \(\lambda=0.04\). For the edge probabilities, the parameters \(\gamma=0.65, \delta=2.7\), and \(\xi=0.3\) are used. Hence, \(\deff<2\) for the soft Boolean model but \(\deff>2\) for the model with local interference.}
    \label{fig:softVsInter}
\end{figure}
Now, since \(\xi<1\), the graph is mixing with exponent \(\zeta=1/\xi-1\). Let us further calculate some values of \(\deff\), see Figure~\ref{fig:DistractBool}. Since \(N^\xi((o,s),\cX)\) is Poisson distributed with parameter of order \(s^{-\xi}\), we infer
\begin{equation*}
    \begin{aligned}
        n^{-\delta}\int\limits_{n^{-1-\mu}}^1 s^{-\gamma\delta} \E^\lambda\Big[\frac{1}{1+ N^\xi((o,s),\cX)}\Big] \d s &\asymp n^{-\delta}\int\limits_{n^{-\mu-1}}^1 s^{-\gamma\delta+\xi} \, \d s  \asymp n^{-\delta}\vee n^{-\delta - (1
        +\mu)(1-\gamma\delta+\xi)}.   
    \end{aligned}
\end{equation*}
Hence, sending \(\mu\downarrow 0\), we obtain if \(\gamma<{(1+\xi)}/{\delta}\), then \(\deff=\delta\). This is in particular always true if \(\xi>\delta-2\). If \(\gamma>{(1+\xi)}/{\delta}\), we have
\begin{equation*}
    \begin{aligned}
        \deff>2 & \qquad \Longleftrightarrow\qquad \delta(1-\gamma)+\xi>1\qquad \Longleftrightarrow \qquad\gamma<(\delta+\xi-1)/\delta. 
    \end{aligned}
\end{equation*}
For \(\xi=0\), we recover the bound for the soft Boolean model found in the previous paragraph. For \(\xi>0\), we observe that the local interferences indeed make it harder to percolate, cf.~Figure~\ref{fig:softVsInter}. 
To get bounds on the tail distribution of \(\Mcal\) and \(\Ncal\), we calculate for \((1+\xi)/\delta<\gamma<(\delta+\xi-1)/\delta\)
\begin{equation*}
	\begin{aligned}
		\bar{\mu} = \delta+(1+\bar{\mu})(1-\gamma\delta\xi)-2 \qquad\Longleftrightarrow\qquad \bar{\mu} = (\delta(1-\gamma)+\xi-1)/(\gamma\delta-\xi).
	\end{aligned}
\end{equation*} 
We therefore obtain the following tail bound for the Euclidean diameter and the number of points
\[
	\big(\tfrac{1}{\xi}-1\big)\wedge 
	\begin{cases}
		(\delta-2), & \text{ if } \ \gamma\leq\tfrac{1+\xi}{\delta}, \\
		\frac{\delta(1-\gamma)+\xi-1}{\gamma\delta-\xi}, & \text{ if } \ \tfrac{1+\xi}{\delta}<\gamma<\tfrac{\delta(1-\gamma)+\xi-1}{\gamma\delta-\xi}.
	\end{cases}
\]

\section{Proofs}\label{sec:Proofs}
In this section, we prove our Theorems~\ref{thm:Subcritical} and~\ref{thm:NonSharpness}. The first section is devoted to prove Theorem~\ref{thm:Subcritical}. Theorem~\ref{thm:NonSharpness} is proven afterwards in Section~\ref{sec:proof(ii)}.

\subsection{Proof of Theorem~\ref{thm:Subcritical}}\label{sec:proof(i)} 
Recall the notation of  
\[
    \cX(D\times I)=\{\x=(x,u_x)\in\cX\colon x\in D, u_x\in I\}
\] 
and \(\cX(D)=\cX(D\times(0,1))\). Further recall the event \(G(\alpha,x)\), resp.\ \(G(\alpha)\) introduced in~\eqref{eq:GEvent}.

Let us write $A^c=\R^d\setminus A$, for the complement of any $A\subset \R^d$, and define two events describing the occurrence of long edges,
\begin{equation*}
    \begin{aligned}
        H(\alpha) &=\big\{\exists \x\in \cX\big(\mathcal{B}({3\alpha^{1/d}})^c\big) \text{ and } \y\in \cX(\mathcal{B}({2\alpha^{1/d}}))\colon \x\sim \y\big\} \quad \text{ and}\\
        F(\alpha) &=\{\exists\x,\y\in \cX(\mathcal{B}({20\alpha^{1/d}})): \x\sim \y\text{ and }|x-y|^d\geq \alpha\}.
    \end{aligned}
\end{equation*}
 
We start by proving that both \(\P^\lambda(H(\alpha))\) and \(\P^\lambda(F(\alpha))\) tend to zero as \(\alpha\to\infty\) whenever \(\delta_\text{eff}>2\). To this end, we introduce for \(\varepsilon>0\) 
\[
	\bar{\mu}(\varepsilon):=\sup\{\mu>0: \mu<\psi^-(\mu)-2-\varepsilon\}.
\]

\medskip

\begin{lemma}\label{lem:Errors} If \(\deff^- >2\), then for all \(\varepsilon\in(0,\deff^- -2)\) there exists \(A>1\) such that for all \(\mu<\bar{\mu}(\varepsilon)\) there exists a constant \(C=C(\mu)>0\) such that for all \(\lambda>0\) and \(\alpha>A\), we have
        \begin{equation*}
            \P^\lambda\big(H(\alpha)\big)\vee \P^\lambda\big(F(\alpha)\big) \leq C (\lambda\vee\lambda^2) \alpha^{-\mu}.
        \end{equation*}
\end{lemma}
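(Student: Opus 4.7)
The plan is to combine the Mecke formula for the marked Poisson point process with a small-mark bad-event decomposition, which isolates the contribution of atypical marks and avoids any need to integrate $\varphi(s,t,\cdot)$ over the full mark range $[0,1]^2$ (problematic in the absence of monotonicity of $\varphi$ in the mark arguments). Mecke's formula bounds each of $\P^\lambda(H(\alpha))$ and $\P^\lambda(F(\alpha))$ by a $\lambda^2$-weighted integral of $\varphi$ over positions and marks, and I then split based on whether some vertex in the relevant bounded region has an atypically small mark.

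For $F(\alpha)$, both endpoints of a qualifying edge lie in $\B(20\alpha^{1/d})$. Introduce the bad event $A := \{\exists \z \in \cX(\B(20\alpha^{1/d})) : u_z < \alpha^{-(1+\mu)}\}$; by Poisson thinning, $\P^\lambda(A) \leq C\lambda \alpha^{-\mu}$. On $A^c$, both endpoints of the edge have marks at least $\alpha^{-(1+\mu)}$, so Mecke restricted to this mark range combined with a dyadic decomposition of $|x-y|^d$ along scales $[2^k\alpha, 2^{k+1}\alpha]$, $k=0,1,\dots$, yields
\[
    \P^\lambda(F(\alpha) \cap A^c) \leq C\lambda^2 \sum_{k \geq 0} \alpha \cdot 2^k\alpha \cdot \int_{\alpha^{-(1+\mu)}}^1 \int_{\alpha^{-(1+\mu)}}^1 \varphi(s,t,2^k\alpha) \, \d s \, \d t.
\]
Since $\alpha^{-(1+\mu)} \geq (2^k\alpha)^{-(1+\mu)}$ for $k \geq 0$, the mark integral at scale $n := 2^k\alpha$ is no larger than $\int_{n^{-1-\mu}}^1 \int_{n^{-1-\mu}}^1 \varphi(s,t,n) \, \d s \, \d t$, which by the definition of $\psi^-(\mu)$ is at most $n^{-\psi^-(\mu)+\varepsilon_1}$ for any $\varepsilon_1 > 0$ once $\alpha$ is large enough. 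Choosing $\varepsilon_1 < \varepsilon$, the assumption $\mu < \bar\mu(\varepsilon)$ forces $\psi^-(\mu) > 2 + \mu + \varepsilon$, so the resulting sum is geometric and bounded by $C\lambda^2 \alpha^{-\mu}$.

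For $H(\alpha)$, the vertex $\x$ lies in the unbounded set $\B(3\alpha^{1/d})^c$, so a single small-mark event no longer suffices. I decompose $H(\alpha) \subseteq \bigcup_{k \geq 0} H_k(\alpha)$, where $H_k(\alpha)$ restricts $\x$ to the dyadic annulus $\B(3 \cdot 2^{k+1}\alpha^{1/d}) \setminus \B(3 \cdot 2^k\alpha^{1/d})$, and at each scale $k$ introduce the scale-dependent bad event $A_k := \{\exists \z \in \cX(\B(3 \cdot 2^{k+1}\alpha^{1/d})) : u_z < (2^{kd}\alpha)^{-(1+\mu)}\}$, with $\P^\lambda(A_k) \leq C\lambda \alpha^{-\mu} 2^{-kd\mu}$, summable in $k$. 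On $A_k^c$ both endpoints have marks at least $(2^{kd}\alpha)^{-(1+\mu)}$ and are separated by $|x-y|^d \asymp 2^{kd}\alpha$, so Mecke together with the $\psi^-(\mu)$ bound at the scale $n = 2^{kd}\alpha$ produces, after summation over $k$, a bound of order $C\lambda^2 \alpha^{-\mu}$. Adding the contributions of the bad events gives the full claim.

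The main obstacle is that, without monotonicity of $\varphi$ in the mark coordinates, the integral $\int_0^1 \int_0^1 \varphi(s,t,n) \, \d s \, \d t$ is not directly controllable by $n^{-\psi^-(\mu)+\varepsilon_1}$; in principle the small-mark corner could dominate. The bad-event split circumvents this, since the small-mark contribution is captured by a first-moment $\lambda$-linear Poisson-thinning estimate, while on the complementary event the Mecke mark integration is naturally confined to $[n^{-1-\mu},1]^2$, precisely the region entering the definition of $\psi^-(\mu)$.
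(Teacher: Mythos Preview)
Your proof is correct and follows the same strategy as the paper: split off small-mark bad events via Poisson thinning (producing the $\lambda$-linear term), and on the complement apply Mecke's formula with the mark integration confined to $[n^{-1-\mu},1]^2$ so that the definition of $\psi^-(\mu)$ applies directly. The only cosmetic difference is that for $H(\alpha)$ the paper uses a single position-dependent cutoff $u_x\wedge u_y\geq |x|^{-d(1+\mu)}$ (and integrates in one pass over $|x|^d>3^d\alpha$) rather than your dyadic annular decomposition with scale-dependent cutoffs; the two computations are equivalent.
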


\medskip 

\begin{proof}
    Define for \(\mu\in(0,\bar{\mu}(\varepsilon))\) the event
    \[
        \widetilde{H}(\alpha) := \widetilde{H}(\alpha,\mu)= \Big\{\exists \x\in \cX\big(\mathcal{B}({3\alpha^{1/d}})^c\big) \text{ and } \y\in \cX\big(\mathcal{B}({2\alpha^{1/d}})\big)\colon u_x\wedge u_y\geq |x|^{-d(1+\mu)} \text{ and }\x\sim \y\Big\},
    \]
    and note that 
    \[H(\alpha)\subset\widetilde{H}(\alpha)\cup\Big\{\exists\x\in\cX\big(\mathcal{B}\big({3\alpha^{1/d}}\big)^c\big)\colon |u_x|<|x|^{-d(1+\mu)}\Big\}\cup\Big\{\exists\y\in\cX\big(\mathcal{B}({2\alpha^{1/d}}\big)\colon |u_y|<3^d\alpha^{-(1+\mu)}\Big\}.\]
    By the thinning theorem for Poisson processes~\cite{LastPenrose2017}, the latter two events have probabilities of order
    \begin{equation}\label{eq:markCut}
    	\begin{aligned}
    		\lambda \int\limits_{|x|^d>3^d\alpha} \d x \ |x|^{-d(1+\mu)} = \tfrac{\omega_d}{\mu 3^{d\mu}} \lambda \alpha^{-\mu},\qquad\text{ and }\qquad \lambda\int\limits_{|y|^d<2^d\alpha} \d y \ (3^d\alpha)^{-(1+\mu)} = \tfrac{3^d\omega_d}{2^{d(1+\mu)}} \lambda\alpha^{-\mu},
    	\end{aligned}
    \end{equation}
    where \(\omega_d\) denotes the volume of the \(d\)-dimensional unit ball. For the event \(\widetilde{H}(\alpha)\), we infer using Mecke's equation~\cite{LastPenrose2017}, \eqref{eq:varphi} and the definition of \(\psi^-(\mu)\), 
    \begin{equation*}
        \begin{aligned}
            \P(\widetilde{H}(\alpha)) & \leq \E^\lambda\Big[\sum_{\y\in\cX\big(\mathcal{B}({2\alpha^{1/d}})\big)}\sum_{\x\in\cX\big(\mathcal{B}({3\alpha^{1/d}}^c)\big)}\1\{u_x,u_y>|x|^{-d(1+\mu)}\}\mathbf{p}(\x,\y,\cX\setminus\{\x,\y\})\Big] \\
            & = \lambda^2\int\limits_{|y|^d<2^d\alpha}\d y \int\limits_{|x|^d>3^d\alpha}\d x\int\limits_{|x|^{-d(1+\mu)}}\d u_y \int\limits_{|x|^{-d(1+\mu)}}\d u_x \; \E^\lambda[\mathbf{p}(\x,\y,\cX)] \\ 
            & = \lambda^2\int\limits_{|y|^d<2^d\alpha}\d y \int\limits_{|x|^d>3^d\alpha} \d x \int\limits_{|x|^{-d(1+\mu)}}^{1}\d u_y \int\limits_{|x|^{-d(1+\mu)}}^1\d u_x \;  \varphi(u_x,u_y,|x-y|^d) \\
            & \leq C \lambda^2 \omega_d \alpha \int\limits_{|x|^d> \alpha} \d x \int\limits_{|x|^{-d(1+\mu)}}^{1}\d u_y \int\limits_{|x|^{-d(1+\mu)}}^1\d u_x \ \varphi(u_x,u_y,|x|^d)\\
            & \leq C\lambda^2 \omega_d \alpha\int\limits_{|x|^d> \alpha} |x|^{-d(\psi^-(\mu)+\varepsilon)} \d x \\
            &\leq C\omega_d^2 \lambda^2 \alpha^{2-\psi^-(\mu)+\varepsilon}
        \end{aligned}
    \end{equation*}
    for sufficiently large \(\alpha\) and where we have used \(\psi^-(\mu)>1\) for all \(\mu<\bar{\mu}\) in the last step to bound the integration constant \(\omega_d/(\psi^-(\mu)-1)<\omega_d\).  The claim follows then since \(\mu<\psi^-(\mu)-2-\varepsilon\) for all \(\mu<\bar{\mu}(\varepsilon)\). 
    
    Analogously, we can again use the same cut-off of the vertex marks and consider the coinciding event \(\widetilde{F}_\alpha\) and have
    \begin{equation*}
        \begin{aligned}
            \P(\widetilde{F}(\alpha)) & \leq \lambda^2\int\limits_{|y|^d<20^d\alpha}\d y \int\limits_{|x-y|^d>\alpha} \d x \int\limits_{|x|^{-d(1+\mu)}}^{1}\d u_y \int\limits_{|x|^{-d(1+\mu)}}^1\d u_x \ \varphi(s,t,|x-y|^d) \\ 
            & \leq C\lambda^2 \omega_d \alpha \int\limits_{|x|^d>\alpha} \d x \int\limits_{|x|^{-d(1+\mu)}}^{1}\d u_y \int\limits_{|x|^{-d(1+\mu)}}^1\d u_x \ \varphi(s,t,|x|^d) \\ 
            &\leq C \lambda^2 \alpha^{2-\psi^-(\mu)+\varepsilon},
        \end{aligned}
    \end{equation*}
    as desired.
\end{proof}

\medskip

\begin{proof}[Proof of Theorem~\ref{thm:Subcritical}]
We start proving \(\widehat{\lambda}_c>0\) whenever \(\G\) is mixing with index \(\zeta>0\) and \(\deff>2\). We fix some \(\varepsilon\in(0,\deff-2)\) and the corresponding \(A>1\) from Lemman~\ref{lem:Errors} throughout the first part of the proof. Then, for all \(\alpha>A\)
\begin{equation*}
	\begin{aligned}
		\P^\lambda\Big(\exists \x\in\cX\big(\B({\alpha^{1/d}})\big),\y\in\cX\big(\B(3\alpha^{1/d})^c\big):\x\leftrightarrow\y\Big) & \leq \P^\lambda(G(\alpha))+\P^{\lambda}(H(\alpha))  \leq \P^\lambda(G(\alpha))+C\lambda \alpha^{-\mu}
	\end{aligned}
\end{equation*}
for any \(\mu<\bar{\mu}(\varepsilon)\) and \(\lambda\in(0,1)\). It therefore remains to show that \(\P^\lambda(G(\alpha))\to 0\) for sufficiently small \(\lambda\in(0,1)\). To this end, we apply the multi-scale scheme of~\cite{Gouere08}. Let us denote by \(\partial B \) the boundary of a set \(B\subset\R^d\). Let further \(\mathcal{K},\mathcal{L}\subset\R^d\) be two finite sets satisfying \(\mathcal{K}\subset\partial\B(9)\) and \(\mathcal{L}\subset\partial\B(20)\) such that 
\[
	\partial\B(9)\subset\bigcup_{k\in\mathcal{K}}\B(1,k), \quad \text{ and } \quad \partial\B(20)\subset\bigcup_{l\in\mathcal{L}}\B(1,l).
\]
The key observation is 
\begin{equation}\label{eq:FactoriseP(G)}
	    G({10^d\alpha})\setminus F(\alpha)\subset \Big(\bigcup_{k\in \mathcal{K}}G(\alpha,\alpha^{1/d} k)\Big)\cap\Big(\bigcup_{{l}\in \mathcal{L}}G(\alpha,\alpha^{1/d}{l})\Big).
\end{equation}
This is due to the fact that on \(G({10^d\alpha})\setminus F(\alpha)\), there exists a path from a vertex located in \(\mathcal{B}({10\alpha^{1/d}})\) to some vertex located in the annulus \(\mathcal{B}({30\alpha^{1/d}})\setminus\mathcal{B}({20\alpha^{1/d}})\) using only vertices located in \(\mathcal{B}({30\alpha^{1/d}})\) and edges no longer than \(\alpha^{1/d}\). Therefore, at least one vertex of the path must lie within distance \(\alpha^{1/d}\) of the sphere \(\partial\B({9\alpha^{1/d}})\) and another vertex within the same distance of the sphere \(\partial\B(20\alpha^{1/d})\). By the covering property of \(\mathcal{K}\), there exists some \(k\in\mathcal{K}\) such that the vertex within distance \(\alpha^{1/d}\) of \(\partial\B(9\alpha^{1/d})\) is located in \(\mathcal{B}({\alpha^{1/d}},\alpha k)\).  As the (sub)path starting in that vertex contains only edges shorter than \(\alpha^{1/d}\) and reaches a vertex located in \(\B(20\alpha^{1/d})^c\), the starting vertex is connected to some vertex in \(\B(3\alpha^{1/d},x_k)\setminus\B(2\alpha^{1/d},x)\) by a path only consisting of vertices located in \(\B(3\alpha^{1/d},x)\). Put differently, the event \(G(\alpha, \alpha^{1/d}k)\) occurs. By a similar argumentation also \(G(\alpha,\alpha^{1/d}l)\) occurs for some \(l\in\mathcal{L}\), see Figure~\ref{fig:sketch}. As a result
\[
	\P^\lambda\big(G(10^d\alpha)\setminus F(\alpha)\big)\leq \sum_{k\in\mathcal{K}, \, l\in\mathcal{L}}\P^\lambda\big(G(\alpha,\alpha^{1/d}k)\cap G(\alpha,\alpha^{1/d}l)\big).
\]
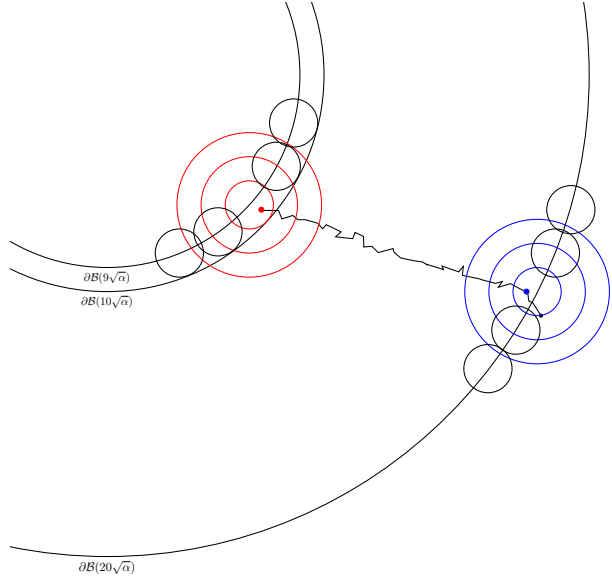
\begin{figure}
    \begin{center}
    \resizebox{1\textwidth}{!}{
    \begin{minipage}{\textwidth}
    \centering
    \begin{tikzpicture}[scale = 0.65, every node/.style={scale=0.4}]
        \clip (-2,1.5) rectangle ++(13,-13);
        \draw[] (0,0) circle(4);
        \draw[] (0,0) circle(4.5);
        \draw[] (0,0) circle(10);

        \draw (0,-4) node[anchor = north, scale = 1.1]{$\partial\B(9\sqrt{\alpha})$};
        \draw (0,-4.5) node[anchor = north, scale = 1.1]{$\partial\B(10\sqrt{\alpha})$};
        \draw (0,-10) node[anchor = north, scale = 1.2]{$\partial  \B(20\sqrt{\alpha})$};

        \node (S) at (3.2,-2.8)[scale = 0.5, circle, fill = red, color =red,  label = {},draw]{};
        \node (T1) at (8.7, -4.5)[draw, circle, scale = 0.5, color=blue, fill = blue]{};
        \node (T2) at (9,-5)[draw, circle, scale = 0.3, color =black, fill = black]{};
        \draw[decorate, decoration = {random steps, segment length = 1 mm}] (S)--(T1);
        \draw[decorate, decoration = {random steps, segment length = 1 mm}] (T1)--(T2);
        \draw[red] (2.95,-2.7) circle(0.5);
        \draw[red] (2.95,-2.7) circle(1);
        \draw[red] (2.95,-2.7) circle (1.5);
        \draw (3.51,-1.9) circle(0.5);
        \draw (2.3,-3.26) circle(0.5);
        \draw (3.87,-1) circle(0.5);
        \draw (1.5,-3.7) circle(0.5);
        \draw[blue] (8.92,-4.5) circle(0.5);
        \draw[blue] (8.92,-4.5) circle(1);
        \draw[blue] (8.92,-4.5) circle(1.5);
        \draw (8.48, -5.3) circle(0.5);
        \draw (9.3, -3.7) circle(0.5);
        \draw (7.9,-6.1) circle(0.5);
        \draw (9.62,-2.8) circle(0.5);
    \end{tikzpicture}
    \end{minipage}
    }
    \end{center}
    \caption{Sketch of~\eqref{eq:FactoriseP(G)} in \(d=2\). A path starting from a vertex located inside \(B(10\sqrt{\alpha})\) in red leaving \(\B(20\sqrt{\alpha})\). On the spheres \(\partial\B(9{\sqrt{\alpha}})\) and \(\partial\B(20\sqrt{\alpha})\) parts of the covering \(\bigcup_{k\in\mathcal{K}}\B(\alpha,\sqrt{\alpha}k)\) and \(\bigcup_{l\in\mathcal{L}}\B(\alpha,\sqrt{\alpha}l)\) are shown. In red, the ball of the covering the starting vertex is located in. Also, the balls of radius \(2\sqrt{\alpha}\) and \(3\sqrt{\alpha}\) around the centre of the red covering ball are shown. Since all edges are shorter than \(\sqrt{\alpha}\) one point of the path must be located in \(\B(3\sqrt{\alpha})\setminus\B(2\sqrt{\alpha})\). The same situation occurs where the sphere \(\partial\B(20\sqrt{\alpha})\) is crossed and is here shown in blue.}
    \label{fig:sketch}
\end{figure}
Therefore, we obtain the existence of a constant \(C_1>0\) depending only on the choice of \(\mathcal{K}\) and \(\mathcal{L}\) such that
\[
    \P^\lambda(G({10^d\alpha}))\leq C_1 \ \P^\lambda(G(\alpha))^2 + \P^\lambda(F(\alpha)) + C_1\max_{k\in\mathcal{K},\, l\in\mathcal{L}}\Big(\P^\lambda\big(G(\alpha,\alpha^{1/d} k)\cap G({\alpha},(\alpha^{1/d} l)\big)-\P^\lambda(G(\alpha))^2\Big).
\]
Since our graph is mixing with index \(\zeta\), we derive from~\eqref{eq:mixing} and translation invariance
\[
    \max_{k\in \mathcal{K},\, l \in\mathcal{L}}\Big(\P^\lambda\big(G(\alpha,\alpha^{1/d} k)\cap G({\alpha},\alpha^{1/d} l)\big)-\P^\lambda(G(\alpha))^2\Big) \leq  C_{\text{mix}}\lambda\alpha^{-\zeta}.
\]
Combined with Lemma~\ref{lem:Errors}, this yields for \(\mu<\bar{\mu}(\varepsilon)\) the existence of a constant \(C_2>0\) such that for all \(\alpha>A\)
\begin{equation*}
    \P^\lambda(G({10^d\alpha}))\leq C_2 \P^\lambda(G(\alpha))^2 + C_2\lambda \alpha^{-(\mu\wedge \zeta)},
\end{equation*}
from which we immediately derive for all \(\alpha>10^d A\) that
\begin{equation}\label{eq:P(G)bound}
    C_2\P^\lambda(G({\alpha}))\leq \big(C_2 \P^\lambda(G(\alpha/10^d))\big)^2 + C_2\lambda 10^{d(\mu\wedge\zeta)}\alpha^{-(\mu\wedge \zeta)}.
\end{equation}
Choose now
\[
	\lambda\leq\frac{1}{2\cdot 30^d \cdot A C_2\omega_d}\wedge\frac{1}{4C_2 10^{d(\mu\wedge\zeta)}}.
\]
Then, we have 
\[
	C_2\P^\lambda(G(\alpha))\leq C_2\E^\lambda[\sharp\cX(\B(3\alpha^{1/d}))]=C_2\lambda \omega_d 3^d  \alpha \leq 1/2, \qquad \forall \, \alpha\in[A,10^d A], 
\]
as well as 
\[
	C_2 10^{d(\mu\wedge\zeta)} \lambda \alpha^{-(\mu\wedge\zeta)}\leq 1/4, \qquad \forall \, \alpha\geq A.
\]
Therefore, \(C_2\P^\lambda(G(\alpha))\leq 1/2\) also for all \(\alpha\geq 10^d A\) by~\eqref{eq:P(G)bound}. Let us write \(g(\alpha):=C_2\lambda 10^{d(\mu\wedge\zeta)}\alpha^{-\mu\wedge\zeta}\). We then infer for all \(\alpha'\in[A,10^d A]\), and \(n\in\N\) by iterating \eqref{eq:P(G)bound}
\begin{equation*}
	\begin{aligned}
		C_2\P^\lambda\big(G(10^{dn}\alpha')\big) & \leq \tfrac{C_2}{2}\P^\lambda\big(G(10^{d(n-1)}\alpha')\big)+g(10^{dn}\alpha') \leq 2^{-n-1}+\sum_{j=1}^n \frac{g(10^{dj}\alpha')}{2^{n-j}}. 
	\end{aligned}
\end{equation*}

We obtain \(\sum_{j=1}^n g(10^{dj}\alpha')2^{-(n-j)}\to 0\) as \(n\to\infty\) by Cauchy's product formula as \(\sum_n 2^{-n}<\infty\) as well as \(\sum_n g(10^{dn}\alpha')<\infty\), proving Part~\textit{(i)} of Theorem~\ref{thm:Subcritical}. 

To prove Part~\textit{(ii)}, we fix \(\lambda<\widehat{\lambda}_c\). We aim for showing the finiteness of the integral
\[
	\E^\lambda \Mcal^a \leq 1 + \int_1^\infty \alpha^{a-1}\P^\lambda(\Mcal\geq \alpha) \, \d \alpha
\]
for any \(a<\zeta\wedge\bar{\mu}\). We hence have to bound the tail probability of \(\Mcal\) appropriately. For fixed \(a<\zeta\wedge\bar{\mu}\) we choose \(\varepsilon>0\) with corresponding \(A>1\) such that \(a<\zeta\wedge \bar{\mu}(\varepsilon)\) and fix some \(\mu\in(a,\bar{\mu}(\varepsilon))\). 

We start by bounding the probability of the event \(E(\alpha)\) that the origin itself is incident to an edge no shorter than \(\alpha^{1/d}\). With the same arguments used in~\eqref{eq:markCut}, we can restrict ourselves to neighbours \(\x=(x,u_x)\) with \(u_x>|x|^{-d(1+\mu)}\) and \(u_o>|x|^{-d(1+\mu)}\). Then
\begin{equation*}
	\begin{aligned}
		\P_o(E(\alpha)) & \leq C\alpha^{-\mu}+\int\limits_{|x|^d>\alpha}\d x\int\limits_{|x|^{-d(1+\mu)}}^1 \d u_x \int\limits_{|x|^{-d(1+\mu)}}^1 \d u_o \, \varphi(u_x,u_o,|x|^d)\leq  C\lambda \alpha^{-\mu} + C\lambda \alpha^{1-\psi(\mu)}\leq C\lambda \alpha^{-\mu},
	\end{aligned}
\end{equation*}
by our choice of \(\mu\) for all \(\alpha>A\). This yields for the Euclidean diameter
\[
	\P_o^\lambda(\Mcal>2^d \alpha)\leq \P^\lambda(G(\alpha))+\P_o(E(\alpha))+\P(H(\alpha))\leq \P^\lambda(G(\alpha))+C\lambda \alpha^{-\mu}
\]
for all \(\alpha> A\) by the above and Lemma~\ref{lem:Errors}. Hence,
\[
	\int_A^\infty \alpha^{a-1}\P^\lambda(\Mcal>2^d\alpha) \, \d\alpha \leq \int_A^\infty \alpha^{a-1} \P^\lambda(G(\alpha)) \, \d\alpha + C\int_A^\infty \alpha^{a-\mu-1} \, \d\alpha,
\]
where the second integral of the right-hand side is finite as \(a<\mu\). It hence remains to bound the right-hand side's first integral. Recall~\eqref{eq:P(G)bound} and the function \(g(\alpha)\). Since \(\lambda<\hat{\lambda}_c\), we have \(\P^\lambda(G(\alpha))\to 0\). Thus, by increasing \(A\) if necessary, we find \(\P^\lambda(G(\alpha))<10^{-da}(C_2/2)\) for all \(\alpha>A/10^d\). Then, following the arguments of~\cite{Gouere08}, we infer for \(B>A\) using~\eqref{eq:P(G)bound}
\begin{equation*}
	\begin{aligned}
		C_2 \int_A^B \alpha^{a-1}\P^\lambda(G(\alpha)) \d \alpha  & \leq C_2^2\int_A^B \alpha^{a-1}\P^\lambda(G(\alpha/10^d))^2 \, \d\alpha + \int_A^B \alpha^{a-1}g(\alpha)\, \d\alpha \\
		& \leq \tfrac{C_2}{2} \int_{A/10^d}^{B/10^d} \alpha^{a-1}\P^\lambda(G(\alpha)) \, \d\alpha + \int_A^\infty \alpha^{a-1}g(\alpha) \, \d \alpha \\
		& \leq \tfrac{C_2}{2} \int_{A/10^d}^A \alpha^{a-1}\P^\lambda(G(\alpha)) \, \d\alpha +\tfrac{C_2}{2} \int_A^B\alpha^{a-1}\P^\lambda(G(\alpha)) \, \d\alpha + \int_{A}^\infty \alpha^{a-1}g(\alpha) \, \d \alpha,
	\end{aligned}
\end{equation*}
where we used the change of variables \(\alpha\mapsto \alpha/10^d\) and the bound \(\P^\lambda(G(\alpha))<10^{-da}(C_2/2)\) in the second step. We derive, sending \(B\to\infty\),
\[
	\tfrac{C_2}{2}\int_A^\infty \alpha^{a-1}\P^\lambda(G(\alpha)) \, \d\alpha <\infty,
\] 
proving the finiteness of \(\E^\lambda \Mcal^a\). To finish the proof of Part~\textit{(ii)}, it remains to further show \(\E^\lambda \Ncal^a<\infty\). To this end, observe that for any constant \(c>0\), we have 
\[
	\P^\lambda_o(\Ncal\geq\alpha)\leq \P_o^\lambda(\Mcal\geq c\alpha)+\P^\lambda\big(\sharp\cX(B(c\alpha^{1/d}))>\alpha \big),
\]
and the proof concludes due to the exponential decay of the second term on the right-hand side observed by a standard Chernoff bound for Poisson distributions when \(c\) is chosen small enough.
\end{proof}

\subsection{Proof of Theorem~\ref{thm:NonSharpness}} \label{sec:proof(ii)}
Let us now turn to the proof of \(\deff<2\) implying \(\widehat{\lambda}_c=0\) whenever the connection probability is independent from the surrounding vertex set in which case we have
\[
	\mathbf{p}(\x,\y)=\varphi(u_x,u_y,|x-y|^d).
\] 
due to~\eqref{eq:varphi}. Let us define 
\[
	G'(\alpha)= \{\exists \x\in\cX\big(\B({\alpha^{1/d}})\big),\y\in\cX\big(\B(3^d\alpha)\setminus\B(2\alpha^{1/d})\big):\x\sim\y\}.
\] 
Theorem~\ref{thm:NonSharpness} is a direct consequence of the following lemma which is an adaption of a result of~\cite{GraLuMo2022}.

\medskip

\begin{lemma}
	Let \(\mathbf{p}(\x,\y)=\varphi(u_x,u_y,|x-y|^d)\) and \(\deff^+<2\), then for all \(\lambda>0\), we have
	\[
		\lim_{\alpha\to\infty}\P^{\lambda}(G'(\alpha))=1.	
	\]
\end{lemma}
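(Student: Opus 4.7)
The plan is a first-moment-plus-concentration argument, greatly simplified by the hypothesis $\zeta=\infty$: because $\mathbf{p}(\x,\y)=\varphi(u_x,u_y,|x-y|^{d})$ depends only on the two endpoints, edges are independent Bernoullis conditional on the vertex configuration $\cX$. I would count the number of edges between two carefully chosen vertex subsets, show the expectation diverges, and then upgrade to a probability estimate using that conditional independence.

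Concretely, pick $\mu>0$ small enough that $\psi^+(\mu)<2$. By definition of $\liminf$, there exist $\eta>0$ and $n_0$ such that $\int_{n^{-1+\mu}}^{1}\!\int_{n^{-1+\mu}}^{1}\varphi(s,t,n)\,\d s\,\d t\geq n^{-2+\eta}$ for all $n\geq n_0$. Set $n_\alpha:=4^{d}\alpha$ and $I_\alpha:=(n_\alpha^{-1+\mu},1)$, and let $A_1^\alpha:=\B(\alpha^{1/d})\times I_\alpha$, $A_2^\alpha:=(\B(3\alpha^{1/d})\setminus\B(2\alpha^{1/d}))\times I_\alpha$; define $Z_\alpha$ as the number of edges between $\cX(A_1^\alpha)$ and $\cX(A_2^\alpha)$, so that $\{Z_\alpha\geq 1\}\subseteq G'(\alpha)$. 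For $\x\in A_1^\alpha,\y\in A_2^\alpha$ one has $|x-y|^{d}\leq n_\alpha$; monotonicity of $\varphi$ in its third argument and Mecke's formula therefore yield, for $\alpha$ sufficiently large,
\[
\E^\lambda[Z_\alpha]\;\geq\;\lambda^{2}\omega_d^{2}(3^{d}-2^{d})\alpha^{2}\int_{n_\alpha^{-1+\mu}}^{1}\!\int_{n_\alpha^{-1+\mu}}^{1}\varphi(s,t,n_\alpha)\,\d s\,\d t\;\geq\;c\lambda^{2}\alpha^{\eta}\xrightarrow{\alpha\to\infty}\infty.
\]

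To pass to probability, conditional independence of edges gives $\P^\lambda(Z_\alpha=0\mid\cX)\leq \exp(-\E^\lambda[Z_\alpha\mid\cX])$, whence
\[
\P^\lambda(Z_\alpha=0)\;\leq\;\P^\lambda\!\big(\E^\lambda[Z_\alpha\mid\cX]<\tfrac{1}{2}\E^\lambda[Z_\alpha]\big)+\exp\!\big(-\tfrac{1}{2}\E^\lambda[Z_\alpha]\big),
\]
and a Chebyshev bound reduces the first term to the concentration statement $\operatorname{Var}(\E^\lambda[Z_\alpha\mid\cX])=o(\E^\lambda[Z_\alpha]^{2})$. This concentration is the main obstacle. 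Computing the variance via Mecke for $\E^\lambda[Z_\alpha\mid\cX]=\sum_{\x\in\cX\cap A_1^\alpha}\sum_{\y\in\cX\cap A_2^\alpha}\varphi(\x,\y)$ produces a harmless diagonal term bounded by $\E^\lambda[Z_\alpha]$ plus cross-terms of the form $\lambda^{3}\int_{A_1^\alpha}\!\big(\int_{A_2^\alpha}\!\varphi\,\d\y\big)^{2}\d\x$ (and a symmetric counterpart in $A_2^\alpha$). The inner integral is essentially the expected number of $A_2^\alpha$-neighbours of a vertex with mark $u_x$, and it can blow up as $u_x\downarrow 0$; the mark cut-off $u_x>n_\alpha^{-1+\mu}$ is precisely the remedy. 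Combined with the integrability condition (ii) on $\varphi$, and possibly shrinking $\mu$ further to balance exponents, the cross-terms end up of strictly lower order than $\alpha^{2\eta}$, completing the proof.
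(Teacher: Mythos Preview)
Your overall architecture matches the paper's: both exploit the conditional independence of edges to obtain \(\P^\lambda(Z_\alpha=0\mid\cX)\le\exp\bigl(-\E^\lambda[Z_\alpha\mid\cX]\bigr)\) and then argue that the conditional first moment is large with high probability. The divergence is in that last step. You appeal to Chebyshev and therefore must control \(\operatorname{Var}\bigl(\E^\lambda[Z_\alpha\mid\cX]\bigr)\); the paper instead isolates a high-probability ``\(\mu\)-regularity'' event on which the empirical mark distributions in the two balls are uniformly close to Lebesgue (via Chernoff bounds on bin counts), and on that event bounds the double sum \(\sum_{i,j}\varphi(u_i,v_j,8^d\alpha)\) deterministically from below by a constant times \(\alpha^{2}\int_{\alpha^{\mu-1}}^{1}\int_{\alpha^{\mu-1}}^{1}\varphi(u,v,8^d\alpha)\,\d u\,\d v\). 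No variance ever enters.

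The gap in your route is precisely the variance bound, which you acknowledge as ``the main obstacle'' but then assert rather than prove. Writing \(\phi_\alpha(s)=\int_{I_\alpha}\varphi(s,t,n_\alpha)\,\d t\), the cross-term you single out is of order \(\alpha^{3}\lVert\phi_\alpha\rVert_2^{2}\), while \((\E^\lambda[Z_\alpha])^{2}\) is of order \(\alpha^{4}\lVert\phi_\alpha\rVert_1^{2}\). The only universally available inequalities here---\(\phi_\alpha\le 1\), or \(\phi_\alpha(s)\le (c\alpha)^{-1}\int_0^1\!\int_0^\infty\varphi(s,t,r)\,\d r\,\d t\) from monotonicity in \(r\) together with condition~(ii)---yield at best \(\lVert\phi_\alpha\rVert_2^{2}\le\lVert\phi_\alpha\rVert_1\), hence a ratio of order \((\alpha\lVert\phi_\alpha\rVert_1)^{-1}\asymp\alpha^{\psi^+(\mu)-1}\). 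This vanishes only when \(\psi^+(\mu)<1\); in the range \(\psi^+(\mu)\in[1,2)\) the second-moment method does not close without additional structure on \(\varphi\). Neither shrinking \(\mu\) nor invoking integrability~(ii) rescues this, because the paper explicitly does \emph{not} assume monotonicity of \(\varphi\) in the mark arguments, so \(\phi_\alpha\) may be concentrated on a small subset of \(I_\alpha\). The paper's regularity-of-marks argument avoids the variance computation altogether and thereby covers the full range \(\deff^+<2\).
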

\begin{proof}
	Fix \(\mu<1\) small enough to guarantee \(\psi^+(\mu)<2\). We consider the two random sets of vertices \(\eta_1:=\cX(\B(\alpha^{1/d}))\) and \(\eta_2:=\cX(B(3\alpha^{1/d})\setminus\B(2\alpha^{1/d}))\). By definition \(\eta_1\) and \(\eta_2\) are independent Poisson point processes, and we can write
	\[
		\eta_1=\{\x_1,\dots,\x_{N_1}\}, \quad \text{ and }\quad \eta_2=\{\y_1,\dots,\y_{N_2}\},
	\]
	where \(N_1\) and \(N_2\) are two independent Poisson variables with parameters \(\lambda \omega_d\alpha\) resp.\ \(\lambda(3^d-2^d)\omega_d\alpha\). By the standard Chernoff bound for Poisson random variables, we have
	 \begin{equation}\label{eq:NumberOfPoints}
	 	\P^\lambda(N_1<c\alpha)+\P^\lambda(N_2<c\alpha)\leq e^{-c'\alpha},
	 \end{equation}
	 where \(c,c'>0\) are suitable constants depending on \(\lambda\). Since \(\lambda\) will play no more particular role in the remainder of the proof, let us for convenience assume that \(c=1\) to shorten notation. Let us further assume \(\alpha\in\N\) from now on. With this additional assumptions, \eqref{eq:NumberOfPoints} allows us to assume that both \(\eta_1\) and \(\eta_2\) contain  at least \(\alpha\)-many vertices.
	
	Before turning to calculating the probability of \(G'(\alpha)\), let us collect some properties of the involved vertex marks. To this end, let \(U=(U_1,\dots,U_\alpha)\) be a collection of \(\alpha\)-many independent random variables distributed uniformly on \((0,1)\). Let us denote their joint distribution by \(P\). Let us further define for all \(i=1,\dots,\lfloor\alpha^{1-\mu}\rfloor\) 
	\[
		N_U^\alpha(i):= \sum_{j=1}^\alpha \1\big\{U_j\leq\frac{i}{\lfloor\alpha^{1-\mu}\rfloor}\big\}.
	\]
	We say that \(U\) is \emph{\(\mu\)-regular} if
	\[
		N_U^\alpha(i)\geq \frac{i\alpha}{2\lfloor\alpha^{1-\mu}\rfloor}, \quad \forall \, i=1,\dots,\lfloor\alpha^{1-\mu}\rfloor.
	\]
	Since a simple calculation yields
	\[
		E [N_U^\alpha(i)] = \frac{i\alpha}{\lfloor\alpha^{1-\mu}\rfloor},
	\]
	we have by a Chernoff bound for independent Bernoulli random variables
	\[
		P\big(N_U^\alpha(i)<\tfrac{1}{2}E[N_U^\alpha(i)]\big)\leq e^{-c i \alpha^\mu}
	\]
	for some constant \(c>0\). Therefore, a union bound yields
	\begin{equation}\label{eq:muReg}
		\begin{aligned}
			P(U \text{ is not }\mu\text{-regular})\leq \alpha^{1-\mu}e^{-c\alpha^\mu}.
		\end{aligned}
	\end{equation}
	The idea of \(\mu\)-regularity is to gain bounds on the empirical distribution function of \(U\) which we denote by \(F\). Then, on the event of \(\mu\)-regularity, we have for \(t\in(0,1)\)
	\begin{equation}\label{eq:empiricalDistr}
		\begin{aligned}
			\alpha F(t) & = \sum_{i=1}^\alpha \1\{u_i\leq t\} \geq \sum_{i=1}^{\lfloor\alpha^{1-\mu}\rfloor}N_1^\alpha(i-1)\1\big\{i-1<\lfloor\alpha^{1-\mu}\rfloor t\leq i\big\} = N_1^\alpha(\lfloor t \lfloor\alpha^{1-\mu}\rfloor\rfloor) \\
			& \geq \frac{\alpha \lfloor t\lfloor\alpha^{1-\mu}\rfloor\rfloor}{2\lfloor\alpha^{1-\mu}\rfloor}\geq \frac{\alpha}{2}\Big(t-\frac{1}{\lfloor\alpha^{1-\mu}\rfloor}\Big)\geq \frac{\alpha}{3}(t-\alpha^{\mu-1}).		
		\end{aligned}
	\end{equation}
	
	With this at hand, we are ready to calculate the probability of \(G'(\alpha)\). 
	Let us denote the vertices \(\x_i\in\eta_1\) by \(\x_i=(x,u_i)\) and the vertices \(\y_j\in\eta_2\) by \(\y_j=(y_j,v_j)\). On the event \(N_1,N_2\geq\alpha\), we say that \(\eta_1\) is \emph{\(\mu\)-regular} if the collection of marks \((u_1,\dots,u_\alpha)\) is \(\mu\)-regular and we say the same on \(N_2\geq \alpha\) about \(\eta_2\) if \((v_1,\dots,v_\alpha)\) is \(\mu\)-regular. Let us denote by \(A\) the event that \(N_1,N_2\geq \alpha\) and that \(\eta_1,\eta_2\) are \(\mu\)-regular. Combining~\eqref{eq:NumberOfPoints} and~\eqref{eq:muReg}, we obtain for any \(\varepsilon>0\) and sufficiently large \(\alpha\)
	\[
		\P^\lambda(A^c)\leq 2e^{-c\alpha} + 2\P^\lambda(\eta_1 \text{ not }\mu\text{-regular}\mid N_1\geq \alpha) \leq C\alpha^{1-\mu}e^{-c\alpha^{\mu}}\leq \varepsilon.
	\]
	Therefore, for \(\varepsilon>0\) and large enough \(\alpha\)
	\[
		\P^\lambda(G'(\alpha))\geq (1-\varepsilon)\P^\lambda(G'(\alpha)\mid A) \geq (1-\varepsilon)\P^\lambda\big(\exists i,j\in\{1,\dots,\alpha\}:\x_i\sim\y_j \, \big| \, A\big).
	\]
	We finish the proof by showing that the probability on the right-hand side converges to one which is equivalent to showing that the probability of the complement converges to zero. Since \(|x_i-y_j|\leq 8\alpha^{1/d}\) for all \(\x_i\in\eta_1\), and \(\y_j\in\eta_2\) together with the fact that \(\varphi\) is non increasing in the third argument, we have
	\[
		1-\varphi(u_i,v_j,|x_i-y_j|^d)\leq e^{-\varphi(u_i,v_j,8^d \alpha)}.
	\]
	Therefore, writing \(F_1\) (resp.\ \(F_2\)) for the empirical distribution function of the marks \(u_1,\dots,u_\alpha\) (resp.\ \(v_1,\dots,v_\alpha\)), we infer 
	\begin{equation*}
		\begin{aligned}
			\P^{\lambda}\Big(\bigcap_{i,j=1}^\alpha\{\x_i\not\sim\y_j\}\cap A\Big)&\leq \E^\lambda\Big[\1_A \prod_{\substack{i,j=1}}^\alpha e^{-\varphi(u_i,v_j,8^d\alpha)}\Big] \\ 
			& =\E^\lambda\Big[\1_A \exp\Big(-\sum_{i,j=1}^\alpha \varphi(u_i,v_j,8^d\alpha)\Big)\Big] \\
			& = \E^\lambda\Big[\1_A \exp\Big(-\alpha^2\int\int\varphi(u,v,8^d\alpha) \, F_1(\d u) \, F_2(\d v)\Big)\Big] \\
			& \leq \P^\lambda(A)\exp\Big(-C\alpha^2 \int_{\alpha^{\mu-1}}^{1-\alpha^{{\mu-1}}} \int_{\alpha^{\mu-1}}^{1-\alpha^{\mu-1}}\varphi(u,v,8^d\alpha) \, \d u \, \d v\Big) \\
			&\leq \P^\lambda(A)\exp\big(-C \alpha^{2-\psi^+(\mu)-\varepsilon'}\big),
		\end{aligned}
	\end{equation*}
	for some \(\varepsilon'\) small and hence \(\alpha\) large enough and where we used~\eqref{eq:empiricalDistr} in the second to last step. This concludes the proof.
\end{proof}

\paragraph{Acknowledgement.} We gratefully received support by the Leibniz Association within the Leibniz Junior Research Group on \textit{Probabilistic Methods for Dynamic Communication Networks} as part of the Leibniz Competition (grant no.\ J105/2020).

\footnotesize{\printbibliography}
\end{document}